\newtheorem{definition}{Definition}[section]
\newtheorem{theorem}{Theorem}[section]
\newtheorem{lemma}{Lemma}[section]
\newtheorem{remark}{Remark}[section]
 \theoremstyle{plain}
\newtheorem{thm}{Theorem}[section]
\newtheorem{prop}[thm]{Proposition}
\numberwithin{equation}{section}
\begin{document}

\begin{center}
\Large{\textbf{Global existence to the discrete Safronov–Dubovski\v{i} coagulation equations and failure of mass-conservation }}
\end{center}
\medskip
\medskip
\centerline{${\text{Mashkoor~Ali}}$ and   ${\text{Ankik ~ Kumar ~Giri$^*$}}$ }\let\thefootnote\relax\footnotetext{$^*$Corresponding author. Tel +91-1332-284818 (O);  Fax: +91-1332-273560  \newline{\it{${}$ \hspace{.3cm} Email address: }}ankik.giri@ma.iitr.ac.in}
\medskip
{\footnotesize

  \centerline{ ${}^{}$Department of Mathematics, Indian Institute of Technology Roorkee,}
   \centerline{Roorkee-247667, Uttarakhand, India}

}

\bigskip

\begin{quote}
{\small {\em \bf Abstract.} This paper presents the existence of global solutions to the discrete Safronov-Dubvoski\v{i} coagulation equations for a large class of coagulation kernels satisfying $\Lambda_{i,j} = \theta_i \theta_j + \kappa_{i,j}$ with $\kappa_{i,j} \leq A\theta_i \theta_j, \ \ \forall \ \ i,j\ge 1$ where the sequence $(\theta_i)_{i\geq 1}$ grows linearly or superlinearly with respect to $i$. Moreover, the failure of mass-conservation of the solution is also addressed which confirms the occurrence of the gelation phenomenon.}
\end{quote}


\vspace{.3cm}

\noindent
{\rm \bf Mathematics Subject Classification(2020).} Primary: 34A12, 34K30; Secondary: 46B50.\\

{ \bf Keywords:} Safronov–Dubovski\v{i} coagulation equation, Existence of solutions, Gelation.\\



 \section{\textbf{Introduction}}
 Coagulation is one of the mechanisms that cause cluster expansion, in which clusters (or particles) grow in size through successive mergers. Some areas where coagulation phenomena play a significant role include aerosol research, raindrop creation, astrophysics (formation of planets and galaxies), and animal herding; for example, see \cite{RLD 1972, FSK 2000, GL 1995} . Smoluchowski, a Polish physicist, made one of the first contributions in that direction when he developed an infinite system of ordinary differential equations, now known as the discrete Smoluchowski coagulation equation, to describe the time-evolution of a system of particle clusters that, due to Brownian motion, can get close enough to one another for binary coagulation of clusters to take place, see \cite{SMOL 1916, SMOL 1917}.
 
 
 The discrete Smoluchowski coagulation equation depicts the change in concentration of clusters of size $i$ (or $i$-mers) at time $t \geq 0$, written as
 
\begin{align}
\frac{d\omega_i}{dt}&= \frac{1}{2}\sum_{j=1}^{i-1} \Lambda_{j,i-j} \omega_j \omega_{i-j} -\sum_{j=1}^{\infty} \Lambda_{i,j} \omega_i \omega_j \label{SCE}\\
\omega_i(0) &= \omega_i^{in}\geq 0 \label{SCEIC}
\end{align}
for $i \geq 1$. The coagulation kernel $\Lambda_{i,j}$ is a nonnegative and symmetric function i.e.\  $ 0\leq \Lambda_{i,j} = \Lambda_{j,i} \ \ \forall \ \ i, j \ge 1$ that denotes the rate at which clusters of size $i$ coalesce with the clusters of size $j$ to create the larger ones.

The first term on the right-hand side of \eqref{SCE} indicates that the $i$-clusters are formed through binary coalescence of smaller ones, while according to the second term, they are depleted through coagulation with other clusters. In \cite{PBD 1999}, Dubovski\v{i} investigated a dispersed system and proposed the Safronov-Dubovski\v{i} coagulation model, in which only binary collisions between particles can occur simultaneously, the mass of each particle is assumed to be proportional to some $m_0>0$, and the particle is the smallest in the system. As a result, any particles with mass $im_0$ are referred to as $i$-mers. When two particles having masses of $im_0 $ and $jm_0$ collide, then particles grow in the system.  If $j \leq i$, a collision between a $i$-mer and a $j$-mer causes the $j$-mer to split into $j$ monomers. Hence, we have another description of the coagulation process that leads to the balanced equation, also known as the discrete Safronov-Dubovski\v{i} equation:
\begin{align}
\frac{d\omega_i(t)}{dt}&= \omega_{i-1}(t) \sum_{j=1}^{i-1} j \Lambda_{i-1,j} \omega_j(t)-\omega_i(t)  \sum_{j=1}^{i} j \Lambda_{i,j} \omega_j(t)-\sum_{j=i}^{\infty} \Lambda_{i,j} \omega_i(t) \omega_j(t), \hspace{.2cm} i \in \mathbb{N},\label{DOHSE}\\
\omega_i(0) &= \omega_i^{in} \geq 0, \hspace{.2cm} i \in \mathbb{N}. \label{IC}
\end{align}

The initial value problem \eqref{DOHSE}--\eqref{IC} represents the dynamics of evolution of the concentration $\omega_i(t), i\geq 1$ of clusters of size $i$ at time $t \geq 0$. The rate at which $i$-mers collide with $j$-mers is determined by the coagulation kernel, $\Lambda_{i,j}$ (with $i\neq j$). The first sum of the equation \eqref{DOHSE} describes the introduction of the $i$-mer into the system due to collisions between the $(i-1)$-mers and monomers formed from fragmented $j$-mers. If $i=1$, the initial sum is 0. The second sum represents the loss (or decay) of $i$-mers as a result of monomer merging. It is shown that the collision involves exactly $j$ monomers by multiplying the first and second sums by $j$. The third sum depicts the fragmentation caused by encounters with larger particles, which causes $i$-mers to decay.


\par

The moments of the concentration $\omega= (\omega_i(t))_{i\geq 1}$ of order $m \geq 0$ are then defined as 
\begin{align}
  M_m(\omega(t)):= M_m(t) = \sum_{i=1}^{\infty} i^m \omega_i(t),
\end{align}
where the zeroth ($m=0$) and the first ($m=1$) moments denote the total number and mass of particles in the system, respectively.


Regarding the existence, uniqueness, and mass conservation property of the solutions to the equations \eqref{DOHSE}—\eqref{IC}, various results are known. When the coagulation kernels satisfy $\lim_{j \to \infty} \frac{\Lambda_{i, j}}{j}=0, \hspace{.2cm} i, j \geq 1$ and $\omega^{in} \in l_1$, the existence and uniqueness of weak solutions to the equations \eqref{DOHSE}--\eqref{IC} have been explored in \cite{BAG 2005}. Additionally, a relationship between \eqref{DOHSE} and its continuous version has been established by an appropriate sequence of solutions to  \eqref{DOHSE}. In \cite{DAV 2014}, it was shown that the classical solutions exist globally for the bounded kernel  $j\Lambda_{i, j} \leq M, \hspace{.1cm} j \leq i$ and for unbounded kernel $\Lambda_{i, j} \leq C_{\nu} h_i h_j$ with an additional assumption $\frac{h_i}{i }\to 0$. Moreover,  mass conservation property for $\Lambda_{i, j} \leq C_{\nu} h_i h_j$ for $h_i \leq i^{\frac{1}{2}} $ and uniqueness for bounded kernels, i.e., $\Lambda_{i, j}\leq C_{\nu}, \hspace{.1cm} i, j\geq 1$ have been established. Recently, in \cite{DAS 2021}, existence of global classical solutions  and mass-conservation properties of solutions have been established for $\Lambda_{i,j} \leq (1+i + j)^{\vartheta}$ where $\vartheta \in [0,1]$, whereas uniqueness of solutions is shown under the condition that $\Lambda_{i,j}\leq Ci^{\vartheta}$ for $\vartheta\leq 2$.  Additionally, for linear coagulation kernels and kernels fulfilling $\min\{i,j\}\Lambda_{i,j}\leq (i+j)$ for every $i,j \geq 1$, the existence of weak solutions has been demonstrated in \cite{SRD 2022} and \cite{SR 2022} respectively, whereas the uniqueness result is identical to \cite{DAS 2021}. The results of \cite{DAS 2021} have recently been improved, and various additional interesting results have been established in \cite{MPA 2022}.\par

 Noting that the reactions given by \eqref{SCE} and \eqref{DOHSE} do not result in the creation or destruction of particles, hence the total mass is predicted to remain constant during the course of the time evolution. This, however, is not always the case. In \cite{HEZ 1983, JI 1998, PL 1999, LT 1982}, it is observed that for the discrete smoluchowski coagulation equations \eqref{SCE}--\eqref{SCEIC}, the mass conservation property fails for coagulation kernels of the form $\gamma_{i, j} \geq (ij)^{\frac{\alpha_0}{2}}$, where $\alpha_0 \in (1,2] $. It is worth noting that gelation also occurs in the DSDC equations, and we get the same result as in the discrete Smoluchowski coagulation equation. Hence, we define the gelation time $T_{\text{gel}} \in [0,+\infty]$ of a solution $\omega= (\omega_i)_{i\geq 1}$ to the DSDC equation \eqref{DOHSE}--\eqref{IC} by
\begin{align*}
T_{\text{gel}} = \inf \Big\{ t \geq 0 \Big| \sum_{i=1}^{\infty}i \omega_i(t) < \sum_{i=1}^{\infty} i \omega_i^{in} \Big\}. 
\end{align*}

In this article, we look into the existence of global solutions to the DSDC equations \eqref{DOHSE}--\eqref{IC} for a class of coagulation rates of the following form.
 \begin{align*}
 \Lambda_{i,j} = \theta_i \theta_j + \kappa_{i,j}, \hspace{.2cm}i,j \geq 1,
 \end{align*}
 with $\kappa_{i,j} \leq A\theta_i \theta_j(i,j\geq 1)$ for some $A>0$. In this case, $(\theta_i)$ is a sequence of non-negative real numbers that may increase sub-linearly, linearly, or super-linearly with respect to $i$ (for a more detailed explanation, see assumptions \eqref{COAGRATE}--\eqref{COAGRATE2} below).


Our goal in this work is to prove the existence of global solutions to \eqref{DOHSE}--\eqref {IC} for initial data with $\sum_{i=1}^{\infty}\omega_i^{in}< \infty$ and $(\theta_i)$ growing at least linearly, i.e., $\theta_i\geq Bi$ for some $B > 0$. The paper is organized as follows. Section \ref{MR} definition of solutions and assumptions made on coagulation coefficients and provides some preliminary results. Proofs of Theorems \ref{TH1} and \ref{TH2} are discussed in sections \ref{PTH1} and \ref{PTH2}, respectively. Finally, we have demonstrated the gelation phenomenon in Section \ref{GP}.


 \section{\textbf{Main Results and Preliminaries}} \label{MR}
To begin with, we define what we mean by a solution to \eqref{DOHSE}--\eqref{IC}.
 \begin{definition} \label{DEF1}
Let $ T\in (0,+\infty]$ and let $ \omega^{in} =( \omega_i^{in})_{i\geq 1}$ be a sequence of non-negative real numbers. A solution $ \omega=(\omega_i)_{i \geq 1}$ to \eqref{DOHSE}--\eqref{IC} on $[0,T)$ is a sequence of non-negative continuous functions satisfying, for each $i\geq 1$ and $t\in(0,T),$
 \begin{enumerate}
 \item $\omega \in \mathcal{C}([0,T))$ and $\sum_{j=1}^{\infty} \Lambda_{i,j} \omega_j \in L^1(0,t),$
 \item and further 
 \begin{align*}
 \omega_i(t)= \omega_i^{in} + \int_0^t \Big( \omega_{i-1}(s) \sum_{j=1}^{i-1} j \Lambda_{i-1,j} \omega_j(s)-\omega_i(s)  \sum_{j=1}^{i} j \Lambda_{i,j} \omega_j(s)-\sum_{j=i}^{\infty} \Lambda_{i,j} \omega_i(s) \omega_j(s)\Big) ds
 \end{align*}
\end{enumerate}  
 \end{definition}
The coagulation rates $(\Lambda_{i,j})$ are assumed to satisfy the following assumptions, namely there exist two sequences of non-negative real numbers, $(\theta_i)$ and $(\kappa_{i,j})$, such that
 \begin{align}
 \Lambda_{i,j} = \theta_i \theta_j + \kappa_{i,j}. \label{COAGRATE}
 \end{align}
 In addition, we made the following set of assumptions on coagulation rates, which will be used in our analysis, namely

 \begin{align}
 \lim_{i \to \infty}\frac{\theta_i}{i}=0 \hspace{.2cm} \text{and} \hspace{.2cm} \lim_{j\to \infty} \frac{\kappa_{i,j}}{j} =0 \hspace{.2cm} \text{for each} i \geq 1,\label{COAGRATE1} 
 \end{align}
 or 
 \begin{align}
 \inf_{i\geq 1} \frac{\theta_i}{i} =B >0 \hspace{.2cm} \text{and} \hspace{.2cm} \kappa_{i,j} \leq A\theta_i \theta_j \hspace{.2cm} \text{for each} i,j \geq 1(A \geq 0).\label{COAGRATE2}
 \end{align}

Finally, we define the Banach spaces $Y_{p}$ with $p\geq 0$ by
\begin{align*}
Y_{p} = \Big\{ y= (y_i)_{i\geq 1}, \sum_{i=1}^{\infty}i^{p} |y_i| <\infty \Big\},
\end{align*}
equipped with the norm
\begin{align*}
\|y\|_{p} = \sum_{i=1}^{\infty} i^{p} |y_i|.
\end{align*}
Due to the physical significance of the DSDC equations, we will only take into account non-negative solutions, i.e., those that remain within the non-negative cone  $Y_{p}^+$ of  $Y_p$, that is,
\begin{align*}
Y_{p}^+ = \{y \in Y_{p},\hspace{.1cm} y_i \geq 0 \hspace{.15cm} \text{for each}\hspace{.15cm} i \geq 1\}.
\end{align*}

Our first result extends \cite[Proposition 4]{BAG 2005} to the wider class of coagulation rates represented by \eqref{COAGRATE1}--\eqref{COAGRATE2}, and it is as follows:

\begin{theorem} \label{TH1}
Assume that the coagulation kernel satisfies \eqref{COAGRATE} and that either \eqref{COAGRATE1} or \eqref{COAGRATE2} holds. For every $\omega^{in} = (\omega_i^{in})_{i\geq 1} \in Y_1^+$, there exists at least one solution $\omega$ to \eqref{DOHSE}--\eqref{IC} on $[0,+\infty)$, such that $\omega(t) \in Y_{1}^+$ for each $t \in[0.+\infty)$, and
\begin{align}
\|\omega(t)\|_1\leq \|\omega^{in}\|_1. \label{AMC}
\end{align}
\end{theorem}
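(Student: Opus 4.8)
The plan is to obtain $\omega$ as the limit of solutions of finite truncations, in the spirit of the Ball--Carr scheme used in \cite{BAG 2005}. For an integer $n\geq 2$ let $\omega^{n}=(\omega_1^{n},\dots,\omega_n^{n})$ solve the $n$-dimensional system of ordinary differential equations obtained from \eqref{DOHSE} by replacing the last (infinite) sum $\sum_{j=i}^{\infty}$ by $\sum_{j=i}^{n}$, with $\omega_i^{n}(0)=\omega_i^{in}$ for $1\leq i\leq n$. The right-hand side is a polynomial, hence locally Lipschitz, so a unique maximal solution exists; since the two loss terms in the $i$-th equation carry a factor $\omega_i^{n}$ and the gain term is nonnegative, the right-hand side is $\geq 0$ whenever $\omega_i^{n}=0$ and the remaining components are $\geq 0$, so the nonnegative cone is invariant and $\omega_i^{n}\geq 0$ as long as the solution exists.

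The basic a priori estimate is the monotonicity of the truncated mass. Differentiating $\sum_{i=1}^{n} i\,\omega_i^{n}$, shifting the summation index in the gain term, and using $\Lambda_{i,j}=\Lambda_{j,i}$, the interior contributions telescope and one is left with
\[
\frac{d}{dt}\sum_{i=1}^{n} i\,\omega_i^{n}(t)
= -\,n\,\omega_n^{n}(t)\sum_{j=1}^{n} j\,\Lambda_{n,j}\,\omega_j^{n}(t)
-\sum_{i=1}^{n} i\,\Lambda_{i,n}\,\omega_i^{n}(t)\,\omega_n^{n}(t)\ \leq\ 0 ,
\]
so that $\|\omega^{n}(t)\|_{1}\leq\sum_{i=1}^{n} i\,\omega_i^{in}\leq\|\omega^{in}\|_{1}$; in particular each $\omega_i^{n}$ is bounded by $\|\omega^{in}\|_{1}$, which makes the truncated solutions global. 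The same manipulation applied to $\sum_{i=1}^{n}\omega_i^{n}$ gives $\frac{d}{dt}\sum_{i=1}^{n}\omega_i^{n}=-\omega_n^{n}\sum_{j=1}^{n}j\Lambda_{n,j}\omega_j^{n}-\sum_{1\leq i\leq j\leq n}\Lambda_{i,j}\omega_i^{n}\omega_j^{n}\leq 0$; integrating, and using $\Lambda_{i,j}\geq\theta_i\theta_j$ (from \eqref{COAGRATE}) together with $\sum_{1\leq i\leq j\leq n}\theta_i\theta_j\omega_i^{n}\omega_j^{n}\geq\frac12\big(\sum_{i=1}^{n}\theta_i\omega_i^{n}\big)^{2}$, we obtain for every $T>0$ and every $i$
\[
\int_{0}^{T}\omega_i^{n}(s)\sum_{j=i}^{n}\Lambda_{i,j}\,\omega_j^{n}(s)\,ds\ \leq\ \|\omega^{in}\|_{0},
\qquad
\int_{0}^{T}\Big(\sum_{i=1}^{n}\theta_i\,\omega_i^{n}(s)\Big)^{2}\,ds\ \leq\ 2\,\|\omega^{in}\|_{0}.
\]
The second bound is the crucial uniform control when $(\theta_i)$ is superlinear.

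For compactness, fix $i$ and write $\omega_i^{n}(t)=\omega_i^{in}+\int_0^t\big(G_i^{n}(s)-L_i^{n}(s)-P_i^{n}(s)\big)\,ds$, where $G_i^{n}=\omega_{i-1}^{n}\sum_{j=1}^{i-1}j\Lambda_{i-1,j}\omega_j^{n}$ and $L_i^{n}=\omega_i^{n}\sum_{j=1}^{i}j\Lambda_{i,j}\omega_j^{n}$ are finite sums bounded by a constant $C_i$ (depending only on $i$, $\|\omega^{in}\|_{1}$, and finitely many coefficients $\Lambda_{k,j}$), while $P_i^{n}=\omega_i^{n}\sum_{j=i}^{n}\Lambda_{i,j}\omega_j^{n}\geq 0$. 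Hence $t\mapsto\omega_i^{n}(t)-C_i t$ is nonincreasing and $t\mapsto\int_0^t P_i^{n}\,ds$ is nondecreasing and bounded by $\|\omega^{in}\|_{0}$; both families being uniformly bounded, Helly's selection theorem together with a diagonal extraction produce a subsequence along which, for every $i$, $\omega_i^{n}(t)\to\omega_i(t)$ for all $t\geq0$ and $\int_0^t P_i^{n}\,ds\to R_i(t)$ with $R_i$ nondecreasing. Letting $n\to\infty$ in the integral identity, and passing to the limit in the finite sums $G_i^{n},L_i^{n}$ by dominated convergence (every term dominated by $\|\omega^{in}\|_{1}$), gives $\omega_i(t)=\omega_i^{in}+\int_0^t(G_i-L_i)\,ds-R_i(t)$, where $G_i,L_i$ are built from the limit $(\omega_j)$. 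Bounding $P_i^{n}$ below by its partial sums and applying Fatou's lemma yields $R_i(t)\geq\int_0^t\omega_i(s)\sum_{j\geq i}\Lambda_{i,j}\omega_j(s)\,ds$, so in particular $s\mapsto\omega_i(s)\sum_{j\geq i}\Lambda_{i,j}\omega_j(s)\in L^{1}(0,t)$; and since $\sum_{i=1}^{N}i\,\omega_i(t)=\lim_n\sum_{i=1}^{N}i\,\omega_i^{n}(t)\leq\|\omega^{in}\|_{1}$ for every $N$, letting $N\to\infty$ gives $\omega(t)\in Y_1^{+}$ and \eqref{AMC}.

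What remains — and this is the heart of the matter, where the dichotomy \eqref{COAGRATE1}/\eqref{COAGRATE2} is used — is the reverse inequality $R_i(t)\leq\int_0^t\omega_i\sum_{j\geq i}\Lambda_{i,j}\omega_j\,ds$; it identifies $R_i(t)$ with $\int_0^t\omega_i\sum_{j\geq i}\Lambda_{i,j}\omega_j\,ds$, hence gives $\omega_i(t)=\omega_i^{in}+\int_0^t(G_i-L_i-P_i)\,ds$ with $P_i\in L^{1}(0,t)$, whence $\omega_i\in\mathcal{C}([0,\infty))$ (the right-hand side being absolutely continuous) and $\omega$ solves \eqref{DOHSE}--\eqref{IC} in the sense of Definition \ref{DEF1}. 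Under \eqref{COAGRATE1} one has $\Lambda_{i,j}/j\to0$ as $j\to\infty$ for each fixed $i$, so $\sum_{j\geq i}\Lambda_{i,j}\omega_j^{n}\leq C_i'\|\omega^{in}\|_{1}$ uniformly in $n$, the derivatives $d\omega_i^{n}/dt$ are bounded, $\{\omega_i^{n}\}_n$ is equi-Lipschitz, and dominated convergence gives the identity directly (this is essentially the argument of \cite[Proposition 4]{BAG 2005}). Under \eqref{COAGRATE2} the sequence $(\theta_i)$ may grow superlinearly and $P_i^{n}$ need not be uniformly bounded; here one uses $\Lambda_{i,j}\leq(1+A)\theta_i\theta_j$ and the $L^{2}$-in-time bound above to see that $\{P_i^{n}\}_n$ is bounded in $L^{2}(0,T)$, hence uniformly integrable, and then controls the tail $\int_0^t\omega_i^{n}(s)\sum_{j>N}\Lambda_{i,j}\omega_j^{n}(s)\,ds$ uniformly in $n$ — e.g.\ by propagating a bound on a higher $\theta$-weighted moment, or through a de la Vall\'ee--Poussin function associated with $\omega^{in}\in Y_1$ — so as to pass to the limit by Vitali's convergence theorem. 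This uniform-integrability and tail step in the superlinear case is the only point where genuinely new work is required, and I expect it to be the main obstacle; the remainder is standard truncation-and-compactness bookkeeping.
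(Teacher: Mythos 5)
Your truncation scheme is sound as far as it goes: nonnegativity and global existence for the truncated system, the telescoping identity giving monotonicity of the truncated mass, the $\ell^0$ estimate with the dissipation bound $\int_0^T\bigl(\sum_i\theta_i\omega_i^n\bigr)^2ds\le 2\|\omega^{in}\|_0$, and the Helly/Fatou passage to a limit satisfying $\omega_i(t)=\omega_i^{in}+\int_0^t(G_i-L_i)\,ds-R_i(t)$ with $R_i(t)\ge\int_0^t\omega_i\sum_{j\ge i}\Lambda_{i,j}\omega_j\,ds$ all parallel the paper's estimates \eqref{EST1}--\eqref{EST2} (the paper uses an $L^2(0,T)$ bound on $d\omega_i^n/dt$ plus the embedding $W^{1,2}\hookrightarrow\mathcal{C}$ instead of Helly, an inessential difference). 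The genuine gap is the step you yourself flag as ``the main obstacle'': the identification $R_i(t)=\int_0^t\omega_i(s)\sum_{j\ge i}\Lambda_{i,j}\omega_j(s)\,ds$ under \eqref{COAGRATE2}, which is precisely where the theorem goes beyond \cite{BAG 2005}. Boundedness of $P_i^n$ in $L^2(0,T)$ gives equi-integrability in the time variable only; it does not prevent the $j$-tail from contributing a nontrivial defect to the limit, i.e.\ it does not exclude $R_i>\int_0^t P_i\,ds$. What is required is a quantitative bound of the form $\sup_{n}\int_0^T\bigl|\sum_{j= r}^{n}\Lambda_{i,j}\omega_j^n(s)\bigr|^2ds\to 0$ as $r\to\infty$, and your proposal does not produce one.

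The substitutes you suggest are unlikely to work. Under \eqref{COAGRATE2} one has $\Lambda_{i,j}\ge\theta_i\theta_j\ge B^2 ij$, so gelation occurs (Section \ref{GP}), and for genuinely superlinear $(\theta_i)$ one expects it immediately; consequently superlinear moments --- integer powers or a de la Vall\'ee--Poussin weight built from $\omega^{in}\in Y_1$ --- cannot be propagated, so no uniform tail control can come from a superlinear weight. The paper's device goes in the opposite direction: it tests the truncated system in \eqref{ME} with a \emph{capped sublinear} weight ($\psi_i=i^\eta$ below the cutoff, constant above, $0<\eta<1$), exploiting the sign of the loss terms to obtain $\int_{t_1}^{t_2}\sum_{i,j}\min\{i,j\}^{\eta}\Lambda_{i,j}\omega_i^n\omega_j^n\,ds\le 2\sum_i i^{\eta}\omega_i^n(t_1)\le 2\|\omega^{in}\|_1$, which gives \eqref{EST3}, namely $\int_{t_1}^{t_2}\bigl|\sum_{j\ge r}\theta_j\omega_j^n\bigr|^2ds\le 2\|\omega^{in}\|_1\,r^{-\eta}$ uniformly in $n$. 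Combined with $\Lambda_{i,j}\le(1+A)\theta_i\theta_j$ this is exactly \eqref{TH1PE2}, yields the $L^2(0,T)$ convergence \eqref{TH1LIM3} of $\sum_{j=i}^{n_l}\Lambda_{i,j}\omega_j^{n_l}$, and closes the argument. Without this (or an equivalent) uniform-in-$n$ tail estimate, your proof is incomplete at its decisive step; with it, your Helly/Vitali bookkeeping would indeed go through.
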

The following result generalizes Theorem \ref{TH1} to a broader class of initial data, when the coagulation rate satisfies \eqref{COAGRATE2}.

\begin{theorem} \label{TH2}
Assume that the coagulation kernel satisfies \eqref{COAGRATE} and \eqref{COAGRATE2}, and that $\omega^{in} = (\omega_i^{in})_{i\geq 1}\in Y_0^+$. Then there exists at least one solution $\omega$ to \eqref{DOHSE}--\eqref{IC} on $[0,+\infty)$, such that $\omega(t)\in  Y_0^+$ for each $ t\in[0, +\infty)$ and, for each $t > 0$,
\begin{align}
\|\omega(t)\|_1<\infty. \label{FM}
\end{align}
\end{theorem}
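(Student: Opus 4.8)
The plan is to bootstrap from Theorem \ref{TH1} by a truncation-and-approximation argument in which the initial data, rather than the kernel, is regularized. Since $\omega^{in}\in Y_0^+$, define truncated data $\omega^{in,n}=(\omega_i^{in}\mathbf 1_{\{i\le n\}})_{i\ge 1}$, which lie in $Y_1^+$ for each $n$. Applying Theorem \ref{TH1} to each $\omega^{in,n}$ under hypothesis \eqref{COAGRATE2} produces solutions $\omega^n$ on $[0,+\infty)$ with $\omega^n(t)\in Y_1^+$ and $\|\omega^n(t)\|_1\le\|\omega^{in,n}\|_1\le\|\omega^{in}\|_1$ for all $t$. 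The goal is to extract a subsequence converging (componentwise, locally uniformly in $t$) to a solution $\omega$ of \eqref{DOHSE}--\eqref{IC} with the original data $\omega^{in}$, and to show that this limit inherits $\|\omega(t)\|_1<\infty$ for every $t>0$ — note the conclusion is only finiteness of the first moment, not its conservation, so we are allowed to lose mass in the limit.

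First I would establish the compactness needed for the passage to the limit. From the uniform bound $\|\omega^n(t)\|_1\le\|\omega^{in}\|_1$ and the equation, each $\omega^n_i$ is equi-Lipschitz on compact time intervals (the right-hand side of \eqref{DOHSE} is controlled using $\Lambda_{i,j}\le(1+A)\theta_i\theta_j$, the linear-growth bound $\theta_i\le \theta_i$ — here one needs an \emph{upper} linear-type control on $\theta_i$ as well; more carefully, one bounds the loss terms by $C_i\|\omega^n(t)\|_1$ and the third sum by $\theta_i\omega^n_i(t)\sum_{j\ge i}\theta_j\omega^n_j(t)$, which is finite since $\theta_j\le Dj$ is not assumed — actually only $\theta_j\ge Bj$ is assumed, so the third sum must instead be estimated using that $\omega^n\in Y_1$ together with an a priori control, exactly as in the proof of Theorem \ref{TH1}). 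Granting the same estimates that made Theorem \ref{TH1} work, a diagonal extraction through Arzelà–Ascoli gives $\omega^n_i\to\omega_i$ locally uniformly for each $i$, with $\omega_i\ge 0$ and $\sum_{i\le N}i\,\omega_i(t)\le\|\omega^{in}\|_1$ for every $N$, hence $\|\omega(t)\|_1\le\|\omega^{in}\|_1<\infty$, which is \eqref{FM}.

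Next I would verify that $\omega$ solves the integral equation in Definition \ref{DEF1}. One passes to the limit in
\[
\omega^n_i(t)=\omega^{in,n}_i+\int_0^t\Big(\omega^n_{i-1}\!\sum_{j=1}^{i-1}j\Lambda_{i-1,j}\omega^n_j-\omega^n_i\!\sum_{j=1}^{i}j\Lambda_{i,j}\omega^n_j-\sum_{j=i}^{\infty}\Lambda_{i,j}\omega^n_i\omega^n_j\Big)\,ds .
\]
The two finite sums converge pointwise in $s$ by the componentwise convergence. For the infinite tail $\sum_{j\ge i}\Lambda_{i,j}\omega^n_i\omega^n_j$ one needs a uniform-integrability / tail-smallness statement: the contribution of indices $j\ge R$ is bounded by $\theta_i(1+A)\,\omega^n_i(s)\sum_{j\ge R}\theta_j\omega^n_j(s)$, and one must show this is small uniformly in $n$ and $s\in[0,t]$. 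This is the technical heart, and it is exactly the point where hypothesis \eqref{COAGRATE2} (rather than merely \eqref{COAGRATE1}) is used, presumably via an a priori differential inequality for a weighted moment $\sum_i\theta_i\omega^n_i$ or $\sum_i i\,\omega^n_i$ that is already available from the proof of Theorem \ref{TH1}; combined with the uniform bound on $\|\omega^n(t)\|_1$ and $\theta_j\ge Bj$, this controls the tail. Once the tail is handled, dominated convergence passes the time-integral to the limit, the initial datum converges $\omega^{in,n}_i\to\omega^{in}_i$ trivially, and the integrability condition $\sum_j\Lambda_{i,j}\omega_j\in L^1(0,t)$ follows by Fatou from the corresponding uniform bounds on $\omega^n$.

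The main obstacle, then, is the uniform control of the tail of $\sum_{j\ge i}\Lambda_{i,j}\omega^n_i\omega^n_j$ on compact time intervals, uniformly in $n$ — equivalently, ruling out that mass escapes to infinity so fast that the loss term degenerates. I expect this to be dispatched by reproducing, for the truncated-data solutions $\omega^n$, the a priori first-moment (or $\theta$-weighted) estimate that underlies Theorem \ref{TH1}, noting that the constants there depend on the data only through $\|\omega^{in}\|_1$, which is uniformly bounded by $\|\omega^{in}\|_1$ for the truncations; the finiteness $\|\omega^{in}\|_1<\infty$ is what makes \eqref{COAGRATE2} usable even though $\omega^{in}$ itself is only assumed to be in $Y_0^+$ — wait, that is not given, so in fact the estimate must be derived along the approximating sequence and the finiteness of $\|\omega(t)\|_1$ for $t>0$ must come from an instantaneous-smoothing effect of the equation; I would look to control $\frac{d}{dt}\sum_i i\,\omega^n_i$ from above by a quantity that, integrated, stays finite even starting from data merely in $Y_0^+$, which is precisely the kind of estimate the authors must have set up in Section \ref{MR} or in the proof of Theorem \ref{TH1}.
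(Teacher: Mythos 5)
Your plan hinges, from the very first step, on the uniform bound $\|\omega^n(t)\|_1\le\|\omega^{in,n}\|_1\le\|\omega^{in}\|_1$, but for data merely in $Y_0^+$ the quantity $\|\omega^{in}\|_1$ may be infinite, so this bound is vacuous precisely in the cases Theorem \ref{TH2} is designed to cover (and $\|\omega^{in,n}\|_1$, while finite for each $n$, blows up with $n$, so it cannot serve as a uniform constant either). You notice this yourself only in the last sentence (``wait, that is not given'') and then correctly guess that an instantaneous-smoothing estimate is needed, but you never produce it; since both the equi-Lipschitz/compactness step and the tail control in your argument are made to rest on that first-moment bound, the proof as written does not close. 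The missing ingredient is exactly the paper's Lemma giving \eqref{MASSRBND}: under \eqref{COAGRATE2} the loss term in the zeroth-moment identity \eqref{EST2} provides the dissipation $\frac12\int_0^t\bigl|\sum_i\theta_i\omega_i^n(s)\bigr|^2ds\le\|\omega^{in}\|_0$, and since $\theta_i\ge Bi$ this controls $\int_0^t\bigl|\sum_i i\,\omega_i^n(s)\bigr|^2ds\le\frac{2}{B^2}\|\omega^{in}\|_0$; combined with the monotonicity of $s\mapsto\sum_i i\,\omega_i^n(s)$ from \eqref{EST1} this yields the pointwise, $n$-uniform bound $\sum_i i\,\omega_i^n(t)\le\frac{2}{B}\|\omega^{in}\|_0^{1/2}t^{-1/2}$, which is what gives \eqref{FM} and, via \eqref{TH1PE1} and \eqref{EST3} applied on $[t_1,T]$ with $t_1>0$ (where the limit lies in $Y_1^+$), the uniform tail control you flag as ``the technical heart.''

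Two further structural points. First, the paper does not truncate the initial data and invoke Theorem \ref{TH1}; it works directly with the finite-dimensional truncated system \eqref{TDOHSE}--\eqref{TIC}, where all moment manipulations (the identity \eqref{ME} and estimates \eqref{EST1}--\eqref{EST3}) are rigorous, and observes that the compactness bounds \eqref{BD1} and Lemma \ref{BD2} depend only on $\|\omega^{in}\|_0$ --- so no first-moment information is needed for the Arzel\`a--Ascoli/diagonal extraction at all. Your route of applying Theorem \ref{TH1} to truncated data would force you to justify the smoothing and dissipation estimates at the level of solutions of the infinite system, which is more delicate than deriving them for the truncated ODEs. Second, the passage to the limit in the integral equation is done in the paper on $[t_1,t_2]$ with $t_1>0$ and then $t_1\to0$ using the $L^2(0,T)$ bound on $\sum_{j\ge i}\Lambda_{i,j}\omega_j$; this two-step limiting procedure is what replaces the ``dominated convergence on $[0,t]$'' you sketch, which again would require a uniform bound near $t=0$ that is not available for $Y_0^+$ data.
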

\begin{remark}
Unfortunately, the coagulation kernel $\Lambda_{i,j} = i+j$ is not included in either of the classes \eqref{COAGRATE1} or \eqref{COAGRATE2}. Although the existence result for this kernel is available in the literature, see \cite{MPA 2022, DAS 2021, SRD 2022}.﻿
\end{remark}

We fix  $ n \geq 3$, consider the following truncated system of $n$ ordinary differential equations,
\begin{align}
\frac{d\omega_i^n(t)}{dt}&= \omega_{i-1}^n(t) \sum_{j=1}^{i-1} j \Lambda_{i-1,j} \omega_j^n(t)-\omega_i^n(t)  \sum_{j=1}^{i} j \Lambda_{i,j} \omega_j^n(t)-\sum_{j=i}^{n} \Lambda_{i,j} \omega_i^n(t) \omega_j^n(t), \hspace{.2cm} i \in \mathbb{N},\label{TDOHSE}\\
\omega_i^n(0) &= \omega_i^{in} \geq 0, \hspace{.2cm} i \in \mathbb{N}. \label{TIC}
\end{align}
The system \eqref{TDOHSE}--\eqref{TIC} has a unique solution $\omega^n =(\omega_i^n)_{1\leq i \leq n}$ defined on $[0,+\infty)$, satisfying $\omega_i^n \geq 0$ for $i =1,\cdots,n.$ The following technical lemma will be helpful in the upcoming calculations.

\begin{lemma} 
Let $(\psi_i)_{1 \leq i \leq n}$ be non-negative real numbers. If $t_2 \in[0,+\infty)$ and $t_1 \in [0,t_2]$, then 
\begin{align}
\sum_{i=1}^n \psi_i \omega_i^n(t_2) - \sum_{i=1}^{n} \psi_i \omega_i^n(t_1) =&\int_{t_1}^{t_2} \sum_{i=1}^{n-1}\sum_{j=1}^i j(\psi_{i+1}- \psi_i)\Lambda_{i,j} \omega_i^n(s) \omega_j^n(s) ds-\int_{t_1}^{t_2} j \psi_n\Lambda_{n,j} \omega_i^n(s) \omega_j^n(s) ds\nonumber\\
&-\int_{t_1}^{t_2} \sum_{i=1}^n \sum_{j=i}^n \psi_i\Lambda_{i,j} \omega_i^n(s) \omega_j^n(s) ds \label{ME}
\end{align}
\end{lemma}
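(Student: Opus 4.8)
The plan is to obtain the identity first in differential form and then integrate. Since the right-hand side of the truncated system \eqref{TDOHSE} is a polynomial, hence locally Lipschitz, in the finitely many unknowns $\omega_1^n,\dots,\omega_n^n$, each component $\omega_i^n$ is continuously differentiable on $[0,+\infty)$; in particular $t\mapsto \sum_{i=1}^n \psi_i\omega_i^n(t)$ is $C^1$, and by the fundamental theorem of calculus it suffices to show that its derivative coincides, for every $s\in[0,+\infty)$, with the integrand appearing on the right-hand side of \eqref{ME}.

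To that end I would multiply \eqref{TDOHSE} by $\psi_i$ and sum over $i=1,\dots,n$. The only term requiring care is the gain term $\sum_{i=1}^n \psi_i\,\omega_{i-1}^n\sum_{j=1}^{i-1} j\Lambda_{i-1,j}\omega_j^n$: its summand vanishes when $i=1$ (the inner sum is empty), so after the substitution $i\mapsto i+1$ it becomes $\sum_{i=1}^{n-1}\sum_{j=1}^{i} j\,\psi_{i+1}\Lambda_{i,j}\,\omega_i^n\omega_j^n$. In the first loss term I would peel off the top index, writing $\sum_{i=1}^n \psi_i\,\omega_i^n\sum_{j=1}^{i} j\Lambda_{i,j}\omega_j^n = \sum_{i=1}^{n-1}\sum_{j=1}^{i} j\,\psi_i\Lambda_{i,j}\,\omega_i^n\omega_j^n + \sum_{j=1}^{n} j\,\psi_n\Lambda_{n,j}\,\omega_n^n\omega_j^n$. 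Subtracting, the two double sums over $1\le i\le n-1$ merge into $\sum_{i=1}^{n-1}\sum_{j=1}^{i} j(\psi_{i+1}-\psi_i)\Lambda_{i,j}\,\omega_i^n\omega_j^n$, the leftover top-index contribution $-\sum_{j=1}^{n} j\,\psi_n\Lambda_{n,j}\,\omega_n^n\omega_j^n$ survives with a minus sign, and the fragmentation loss term $-\sum_{i=1}^n\sum_{j=i}^n \psi_i\Lambda_{i,j}\,\omega_i^n\omega_j^n$ is carried over verbatim. This is exactly $\tfrac{d}{dt}\sum_{i=1}^n\psi_i\omega_i^n(t)$, so integrating over $[t_1,t_2]\subset[0,+\infty)$ — a trivial operation here since all sums are finite and the $\omega_i^n$ are continuous — yields \eqref{ME} (with the understanding that the second term on the right of \eqref{ME} is shorthand for $\sum_{j=1}^{n} j\,\psi_n\Lambda_{n,j}\,\omega_n^n\omega_j^n$, i.e.\ the index $i$ there should read $n$ and the sum over $j$ from $1$ to $n$ is implicit).

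I do not expect a genuine obstacle: the statement is a bookkeeping identity of Smoluchowski/Dubovski\u\i\ type, and the only places where one can slip are the index shift in the gain term and correctly isolating the boundary contribution at $i=n$ produced by the finite truncation. Note that neither the non-negativity of the $\psi_i$ nor that of the $\omega_i^n$ is used in establishing \eqref{ME} itself; these hypotheses will only become relevant when the lemma is invoked, for a suitable choice of weights $(\psi_i)$, to derive a priori bounds on moments of $\omega^n$.
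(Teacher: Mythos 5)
Your proof is correct and is exactly the intended argument (the paper omits it, deferring to the standard computation as in Bagland's work): multiply \eqref{TDOHSE} by $\psi_i$, sum over $i=1,\dots,n$, shift the index in the gain term, peel off the $i=n$ contribution of the first loss term, and integrate the resulting $C^1$ identity over $[t_1,t_2]$. You also correctly read the second term on the right of \eqref{ME} as shorthand for $\sum_{j=1}^{n} j\,\psi_n\Lambda_{n,j}\,\omega_n^n\omega_j^n$ (the displayed formula's lone index $i$ and missing sum over $j$ are typographical slips in the statement).
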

We deduce the following estimates as a consequence of \eqref{ME}.
\begin{lemma}
Suppose that \eqref{COAGRATE} holds. If $r \in \{1,\cdots,n\}$, $t_2\in[0,+\infty)$ and $t_1\in[0,t_2]$, then
\begin{align}
\sum_{i=1}^n i \omega_i^n(t_2) \leq \sum_{i=1}^n i \omega_i^n(t_1) \leq \sum_{i=1}^n i \omega_i^{in},\label{EST1}
\end{align}
\begin{align}
\sum_{i=1}^n \omega_i^n(t_2) + \frac{1}{2}\int_{t_1}^{t_2}  \Big|\sum_{i=1}^n \theta_i \omega_i^n(s) \Big|^2 ds \leq \sum_{i=1}^n \omega_i^{in},\label{EST2}
\end{align}
\begin{align}
\int_{t_1}^{t_2}  \Big|\sum_{i=r}^n \theta_i \omega_i^n(s) \Big|^2 ds  \leq 2 \Big(\sum_{i=1}^n i^{\eta}\omega_i^n(t_1)\Big)r^{-\eta}, \hspace{.2cm}\text{for some} \hspace{.2cm} 0<\eta <1.\label{EST3}
\end{align}
\end{lemma}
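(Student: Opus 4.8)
The plan is to obtain all three estimates by specializing the identity \eqref{ME} to convenient non-negative test sequences $(\psi_i)_{1\le i\le n}$, using only the symmetry $\Lambda_{i,j}=\Lambda_{j,i}$ and the elementary bound $\Lambda_{i,j}\ge\theta_i\theta_j$ coming from \eqref{COAGRATE} and $\kappa_{i,j}\ge0$. For \eqref{EST1} I would put $\psi_i=i$, so that $\psi_{i+1}-\psi_i=1$. Using symmetry of $\Lambda$ to relabel, both the gain double sum $\sum_{i=1}^{n-1}\sum_{j=1}^{i}j\Lambda_{i,j}\omega_i^n\omega_j^n$ and the fragmentation double sum $\sum_{i=1}^{n}i\sum_{j=i}^{n}\Lambda_{i,j}\omega_i^n\omega_j^n$ reduce, up to terms involving the size-$n$ cluster, to $\sum_i i\Lambda_{i,i}(\omega_i^n)^2+\sum_{i<j}i\Lambda_{i,j}\omega_i^n\omega_j^n$; the bulk parts cancel and one is left with $\sum_{i=1}^n i\omega_i^n(t_2)-\sum_{i=1}^n i\omega_i^n(t_1)=-(1+n)\int_{t_1}^{t_2}\sum_{j=1}^n j\Lambda_{n,j}\omega_n^n\omega_j^n\,ds\le0$, which is the first inequality in \eqref{EST1}; the second is the case $t_1=0$.

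For \eqref{EST2} I would take $\psi_i\equiv1$. Then the gain double sum in \eqref{ME} vanishes, the size-$n$ term is non-negative and carries a minus sign, so $\sum_{i=1}^n\omega_i^n(t_2)-\sum_{i=1}^n\omega_i^n(t_1)\le-\int_{t_1}^{t_2}\sum_{i=1}^{n}\sum_{j=i}^{n}\Lambda_{i,j}\omega_i^n\omega_j^n\,ds$. Since $\big(\sum_i\theta_i\omega_i^n\big)^2=2\sum_i\sum_{j>i}\theta_i\theta_j\omega_i^n\omega_j^n+\sum_i\theta_i^2(\omega_i^n)^2$, we get $\sum_i\sum_{j\ge i}\Lambda_{i,j}\omega_i^n\omega_j^n\ge\sum_i\sum_{j\ge i}\theta_i\theta_j\omega_i^n\omega_j^n\ge\tfrac12\big(\sum_i\theta_i\omega_i^n\big)^2$, and \eqref{EST2} follows after noting that $\sum_i\omega_i^n$ is non-increasing (by the same identity), so that $\sum_i\omega_i^n(t_1)\le\sum_i\omega_i^{in}$.

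The hard part is \eqref{EST3}, and within it I expect the only genuine difficulty to be estimating the flux of mass from clusters of size $<r$ into the tail $\{i\ge r\}$. I would start, as for \eqref{EST2}, from $\big|\sum_{i=r}^n\theta_i\omega_i^n\big|^2\le 2\sum_{i=r}^n\sum_{j=i}^n\theta_i\theta_j\omega_i^n\omega_j^n\le 2D_r(s)$, where $D_r(s):=\sum_{i=r}^n\sum_{j=i}^n\Lambda_{i,j}\omega_i^n(s)\omega_j^n(s)$ is the fragmentation loss restricted to the tail. Inserting the cut-off $\psi_i=\mathbf 1_{\{i\ge r\}}$ into \eqref{ME}, only the jump at $i=r-1$ survives in the gain double sum, $D_r$ is exactly the third term, and dropping the non-negative size-$n$ term gives $\int_{t_1}^{t_2}D_r(s)\,ds\le\sum_{i\ge r}\omega_i^n(t_1)+\int_{t_1}^{t_2}\sum_{j=1}^{r-1}j\Lambda_{r-1,j}\omega_{r-1}^n\omega_j^n\,ds$. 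To control the last integral I would rerun the \eqref{EST1} computation with $\psi_i=i\,\mathbf 1_{\{i\le r-1\}}$, i.e. differentiate the truncated first moment $\sum_{i=1}^{r-1}i\omega_i^n$; the same cancellations leave $\frac{d}{dt}\sum_{i=1}^{r-1}i\omega_i^n=-r\sum_{j=1}^{r-1}j\Lambda_{r-1,j}\omega_{r-1}^n\omega_j^n-\sum_{i=1}^{r-1}\sum_{j=r}^{n}i\Lambda_{i,j}\omega_i^n\omega_j^n$, whence $\int_{t_1}^{t_2}\sum_{j=1}^{r-1}j\Lambda_{r-1,j}\omega_{r-1}^n\omega_j^n\,ds\le\frac1r\sum_{i=1}^{r-1}i\omega_i^n(t_1)$.

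Finally, I would use $i\le r^{1-\eta}i^{\eta}$ for $1\le i\le r-1$ and $1\le r^{-\eta}i^{\eta}$ for $i\ge r$ (both valid because $0<\eta<1$) to obtain $\int_{t_1}^{t_2}D_r(s)\,ds\le r^{-\eta}\sum_{i\ge r}i^{\eta}\omega_i^n(t_1)+r^{-\eta}\sum_{i<r}i^{\eta}\omega_i^n(t_1)=r^{-\eta}\sum_{i=1}^n i^{\eta}\omega_i^n(t_1)$, so that \eqref{EST3} follows after multiplying by $2$ (the case $r=1$ being trivial, since then the flux term is absent and the bound reduces to \eqref{EST2}). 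The main obstacle is precisely this flux estimate — the recognition that the rate at which mass crosses from $\{i<r\}$ into the tail is, up to the factor $1+(r-1)$, exactly the time derivative of the truncated first moment, and that the weight $r^{1-\eta}$ lost in passing from $i$ to $i^{\eta}$ is recovered against the prefactor $1/r$. Everything else is routine bookkeeping of the symmetry relabellings and of the size-$n$ boundary terms in \eqref{ME}.
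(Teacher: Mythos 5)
Your treatment of \eqref{EST1} and \eqref{EST2} is essentially the paper's own: the choices $\psi_i=i$ and $\psi_i\equiv 1$ in \eqref{ME}, the symmetry relabelling that cancels the bulk terms (your explicit $-(1+n)$ boundary identity for $\psi_i=i$ is correct), and the lower bound $\Lambda_{i,j}\geq\theta_i\theta_j$, which indeed needs only \eqref{COAGRATE} and $\kappa_{i,j}\geq 0$ (the paper's invocation of \eqref{COAGRATE2} there is superfluous); your use of the monotonicity of $\sum_i\omega_i^n$ to replace $t_1$ by $0$ also tidies a small slip in the printed argument. For \eqref{EST3}, however, your route is genuinely different from the paper's and is the more solid of the two. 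The paper works with a single weighted-moment substitution described as ``$\psi_i=1$ if $i\geq j$, $\psi_i=i^{\eta}$ if $i<j$'' (a weight depending on $j$, which cannot literally be inserted into \eqref{ME}), asserts the intermediate bound \eqref{EST3-1} with constant $1$, symmetrizes to the $\min\{i,j\}^{\eta}$ form \eqref{EST3_2}, and restricts to $i,j\geq r$; note that the legitimate choice $\psi_i=i^{\eta}$ leaves a nonvanishing gain term and, via concavity, only yields \eqref{EST3-1} with constant $1/(1-\eta)$. You instead apply \eqref{ME} twice: with the tail cut-off $\psi_i=\mathbf{1}_{\{i\geq r\}}$, which isolates the tail dissipation $D_r$ plus the single flux term $\sum_{j<r}j\Lambda_{r-1,j}\omega_{r-1}^n\omega_j^n$, and with $\psi_i=i\,\mathbf{1}_{\{i\leq r-1\}}$, whose derivative identity (which I checked, including the factor $r$) bounds that flux by $r^{-1}\sum_{i<r}i\,\omega_i^n(t_1)$; the elementary inequalities $1\leq r^{-\eta}i^{\eta}$ for $i\geq r$ and $i/r\leq r^{-\eta}i^{\eta}$ for $i<r$, together with $\big|\sum_{i\geq r}\theta_i\omega_i^n\big|^2\leq 2D_r$, then give exactly \eqref{EST3} with the same constant $2$, and your remark that $r=1$ degenerates to the \eqref{EST2} computation closes the last case. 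What the paper's approach buys is a single substitution and the stronger-looking global bound \eqref{EST3_2}; what yours buys is a proof in which every test sequence is admissible for \eqref{ME}, at the modest cost of the extra truncated-first-moment lemma. The proposal is correct.
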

\begin{proof}

Taking $\psi_i = i $ if  $i \in \{1,\cdots,n\}$ in \eqref{ME}, we get,
\begin{align}
\sum_{i=1}^n i \omega_i(t_2)- \sum_{i=1}^{n} i \omega_i^n(t_1)  \leq 0 \label{EST1-1}
\end{align}
On replacing $t_2$ with $t_1$ and $t_1$ with $0$ in \eqref{EST1-1}, we get \eqref{EST1}.

Next let $\psi_i=1$ if $i \in \{1,\cdots,n\}$ and $\tau=0$, we have 
\begin{align*}
\sum_{i=1}^n \omega_i^n(t) -\sum_{i=1}^n \omega_i^n(\tau)  \leq -\int_{t_1}^{t_2} \sum_{i=1}^n \sum_{j=i}^n\Lambda_{i,j} \omega_i^n(s) \omega_j^n(s) ds =-\frac{1}{2}\int_{t_1}^{t_2} \sum_{i=1}^n \sum_{j=1}^n\Lambda_{i,j} \omega_i^n(s) \omega_j^n(s) ds 
\end{align*} 
Now from \eqref{COAGRATE2}, using $\theta_i \theta_j \leq \Lambda_{i,j}$, it follows that 
\begin{align*}
\sum_{i=1}^n \omega_i^n(t) +\frac{1}{2}\int_{t_1}^{t_2} \sum_{i=1}^n \sum_{j=1}^n \theta _i \theta_j \omega_i^n(s) \omega_j^n(s) ds \leq \sum_{i=1}^n \omega_i^{in}, 
\end{align*}
which gives \eqref{EST2}.

 Finally, for $(i,j) \in \{1,\cdots,n\}^2$, we take $\psi_i=1 $ if $i\geq j$ and $\psi_i=i^{\eta}$ if $i<j$ for some $0<\eta <1$ in \eqref{ME}, we have 
 \begin{align}
 \int_{t_1}^{t_2} \sum_{i=1}^n \sum_{j=i}^n i^{\eta} \Lambda_{i,j} \omega_i^n(s) \omega_j^n(s) ds \leq \sum_{i=1}^n \psi_i \omega_i^n(t_1) \leq \sum_{i=1}^n i^{\eta}\omega_i^n(t_1) \label{EST3-1}
 \end{align}

Next, we notice that
\begin{align*}
\sum_{i=1}^n \sum_{j=i}^n i^{\eta}\Lambda_{i,j} \omega_i^n \omega_j^n &= \frac{1}{2}\sum_{i=1}^n \sum_{j=i}^n i^{\eta} \Lambda_{i,j} \omega_i^n \omega_j^n + \frac{1}{2}\sum_{j=1}^n \sum_{i=j}^n j^{\eta}\Lambda_{i,j} \omega_i^n \omega_j^n\\ 
& = \frac{1}{2}\sum_{i=1}^n \sum_{j=i}^n i^{\eta}\Lambda_{i,j} \omega_i^n \omega_j^n + \frac{1}{2}\sum_{i=1}^n \sum_{j=1}^{i} j^{\eta}\Lambda_{i,j} \omega_i^n \omega_j^n\\
&= \frac{1}{2}\sum_{i=1}^n \sum_{j=1}^{n} \min\{i,j\}^{\eta}\Lambda_{i,j} \omega_i^n \omega_j^n.
\end{align*}
With the help of the above estimate, equation \eqref{EST3-1} can be rewritten as
\begin{align}
\int_{t_1}^{t_2} \sum_{i=1}^n \sum_{j=1}^{n} \min\{i,j\}^{\eta}\Lambda_{i,j} \omega_i^n \omega_j^n \leq 2 \sum_{i=1}^n i^{\eta} \omega_i^n(t_1).\label{EST3_2}
\end{align}
Finally, gathering estimates from \eqref{COAGRATE2} and \eqref{EST3_2}, we obtain

\begin{align*}
r^{\eta} &\int_{t_1}^{t_2} \sum_{i=r}^n \sum_{j=r}^n \theta_i\theta_j \omega_i^n(s) \omega_j^n(s) ds\\
&\leq \int_{t_1}^{t_2} \sum_{i=1}^n \sum_{j=1}^{n} \min\{i,j\}^{\eta} \theta_i \theta_j \omega_i^n(s) \omega_j^n(s) ds\\
&\leq \int_{t_1}^{t_2} \sum_{i=1}^n \sum_{j=1}^{n} \min\{i,j\}^{\eta}\Lambda_{i,j} \omega_i^n(s) \omega_j^n(s) ds\\
&  \leq 2 \sum_{i=1}^n i^{\eta} \omega_i^n(t_1).
\end{align*}
Therefore, \eqref{EST3} follows.

\end{proof}

Another outcome of \eqref{ME} is the following lemma, which will help to prove Theorem \ref{TH2}.

\begin{lemma}
Assume that \eqref{COAGRATE} and \eqref{COAGRATE2} hold, and let $t\in (0,+\infty)$. Then 
\begin{align}
\sum_{i=1}^n i \omega_i^n(t) \leq \frac{2}{B} \Big( \sum_{i=1}^n \omega_i^{in} \Big)^{\frac{1}{2}} t^{-\frac{1}{2}}. \label{MASSRBND}
\end{align}
\end{lemma}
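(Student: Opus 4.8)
The plan is to combine the $L^2$-in-time bound on $\sum_i \theta_i \omega_i^n$ coming from \eqref{EST2} with the monotonicity of the first moment provided by \eqref{EST1}, using crucially the linear lower bound $\theta_i \ge Bi$ built into assumption \eqref{COAGRATE2}.

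First I would record that \eqref{COAGRATE2} gives $\theta_i \ge Bi$ for every $i\ge 1$, hence, writing $M_1^n(s):=\sum_{i=1}^n i\,\omega_i^n(s)$,
\[
\sum_{i=1}^n \theta_i\,\omega_i^n(s)\;\ge\; B\sum_{i=1}^n i\,\omega_i^n(s)\;=\;B\,M_1^n(s)\qquad\text{for all }s\ge 0 .
\]
Inserting this into \eqref{EST2} with $t_1=0$ and $t_2=t$ yields
\[
\frac{B^2}{2}\int_0^t \big(M_1^n(s)\big)^2\,ds\;\le\;\frac12\int_0^t\Big|\sum_{i=1}^n \theta_i\,\omega_i^n(s)\Big|^2\,ds\;\le\;\sum_{i=1}^n \omega_i^{in}.
\]

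Next, by \eqref{EST1} the function $s\mapsto M_1^n(s)$ is non-increasing on $[0,+\infty)$, so $M_1^n(s)\ge M_1^n(t)$ for every $s\in[0,t]$; pulling this constant out of the integral above gives
\[
\frac{B^2}{2}\,t\,\big(M_1^n(t)\big)^2\;\le\;\frac{B^2}{2}\int_0^t \big(M_1^n(s)\big)^2\,ds\;\le\;\sum_{i=1}^n \omega_i^{in}.
\]
Solving for $M_1^n(t)$ produces $M_1^n(t)\le \tfrac{\sqrt2}{B}\big(\sum_{i=1}^n \omega_i^{in}\big)^{1/2}t^{-1/2}$, and since $\sqrt2\le 2$ this is even slightly stronger than the claimed bound \eqref{MASSRBND}.

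There is no substantial obstacle here; the only point that needs thought is picking the right a priori estimate. Using \eqref{EST3} instead would introduce an $\eta$-moment of the initial data (which is not controlled for data merely in $Y_0^+$) together with a superfluous $r^{-\eta}$ factor, so \eqref{EST2} paired with the monotonicity \eqref{EST1} is the correct combination, and the remaining argument is just the elementary fact that a non-increasing function may be extracted from the lower endpoint of its own time average.
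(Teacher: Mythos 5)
Your argument is correct and is essentially the paper's own proof: both use $\theta_i\ge Bi$ from \eqref{COAGRATE2} to turn the $L^2$-in-time bound of \eqref{EST2} into a bound on $\int_0^t\big(\sum_i i\,\omega_i^n(s)\big)^2 ds$, and then exploit the monotonicity of the first truncated moment from \eqref{EST1} to extract the pointwise estimate. Your remark that the argument actually yields the constant $\sqrt{2}/B$ rather than $2/B$ is also consistent with the paper's computation, which simply states the slightly weaker bound \eqref{MASSRBND}.
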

\begin{proof}
 Using \eqref{COAGRATE2}, we obtain that
\begin{align*}
\theta_i \geq  Bi.
\end{align*}
Next, we conclude from \eqref{EST2} that
\begin{align}
\int_0^t \Big|\sum_{i=1}^n i \omega_i^n(s)\Big|^2 ds \leq \frac{2}{B^2} \sum_{i=1}^n \omega_i^{in}. \label{MASSRBND1}
\end{align}
As a consequence, the function
\begin{align*}
s\longrightarrow\sum_{i=1}^n i \omega_i^n(s) 
\end{align*}
is non-increasing that follows from \eqref{EST1}, hence using \eqref{MASSRBND1}, we obtain
\begin{align*}
t \Big|\sum_{i=1}^n i \omega_i^n(t) \Big|^2 \leq\frac{2}{B^2}  \sum_{i=1}^n \omega_i^{in};
\end{align*}
which shows that \eqref{MASSRBND} holds.
\end{proof}
Also, we recall a part of the well-known Rellich-Kondrachov theorem \cite{SK 2019}, to prove the compactness of the sequence of solutions to \eqref{TDOHSE}--\eqref{TIC}, which is as follows.
 \begin{theorem} \label{RKT}
 Suppose $\Omega \in \mathbb{R}^n $ is a bounded open set and the boundary of $\Omega$ is a $(n-1)$ dimensional $C^1$ manifold and if $p>n$ then $W^{1,p}$ is compactly embedded in $\mathcal{C}(\bar{\Omega})$.
 \end{theorem}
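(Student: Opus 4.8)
This is a classical result, and for a full proof one simply refers to \cite{SK 2019}; below I only indicate the route I would take. The plan is to obtain the compact embedding as a combination of Morrey's inequality with the Arzel\`a--Ascoli theorem. First, since $\Omega$ is bounded and $\partial\Omega$ is a $C^1$ manifold, one fixes a bounded linear extension operator $E\colon W^{1,p}(\Omega)\to W^{1,p}(\mathbb{R}^n)$ whose range consists of functions supported in a fixed bounded neighbourhood of $\bar{\Omega}$; this is precisely the step that uses the regularity hypothesis on the boundary. Next, for $p>n$ one invokes Morrey's inequality: every $v\in W^{1,p}(\mathbb{R}^n)$ has a representative that is H\"older continuous with exponent $\gamma:=1-n/p\in(0,1)$, together with the bound $\|v\|_{C^{0,\gamma}(\mathbb{R}^n)}\le C\,\|v\|_{W^{1,p}(\mathbb{R}^n)}$. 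Composing with $E$ shows that the inclusion $W^{1,p}(\Omega)\hookrightarrow C^{0,\gamma}(\bar{\Omega})$ is bounded.

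The compactness then follows from Arzel\`a--Ascoli. Take any bounded sequence $(u_k)$ in $W^{1,p}(\Omega)$; by the previous step $(u_k)$ is bounded in $C^{0,\gamma}(\bar{\Omega})$, hence uniformly bounded on the compact set $\bar{\Omega}$ and, thanks to the uniform control of the H\"older seminorms, equicontinuous. Arzel\`a--Ascoli then produces a subsequence converging uniformly on $\bar{\Omega}$, i.e.\ in the norm of $\mathcal{C}(\bar{\Omega})$. Since every bounded sequence of $W^{1,p}(\Omega)$ thus has a subsequence convergent in $\mathcal{C}(\bar{\Omega})$, the embedding is compact.

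The only genuinely non-routine ingredient is Morrey's inequality itself: its proof proceeds by estimating $|v(x)-v(y)|$ by line integrals of $|\nabla v|$ and averaging these bounds over balls, then applying H\"older's inequality in the range $p>n$; together with the construction of the extension operator $E$, this is the part one would either reproduce from \cite{SK 2019} or accept as known. In the present paper the theorem is used only as a black box — applied on a time interval (the case $n=1$, $p>1$) to extract from the a priori bounds of the previous lemmas a subsequence of the truncated solutions $(\omega^n)$ converging in $\mathcal{C}$ of each compact time interval — so no argument beyond the classical one is required.
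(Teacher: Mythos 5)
Your proposal is correct: the extension-operator plus Morrey plus Arzel\`a--Ascoli route is the standard proof of this classical embedding, and it matches the paper's treatment, since the paper offers no proof at all and simply recalls the result from the cited reference \cite{SK 2019} before applying it (as you note) in the one-dimensional case $W^{1,2}(0,T)\hookrightarrow\mathcal{C}([0,T])$.
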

 
At the end of this section, we will provide the well-posedness result of \eqref{TDOHSE}--\eqref{TIC}.
\begin{lemma}
For each $n\geq 3$, the system \eqref{TDOHSE}--\eqref{TIC} has a unique non-negative solution $\omega^n= (\omega_i^n)_{1\leq i\leq n}$ in $\mathcal{C}^1([0,+\infty), \mathbb{R}^n))$. Moreover, we have
\begin{align*}
\sum_{i=1}^n i\omega_i^n(t) \leq \sum_{i=1}^{n} i w_i^{in}, \hspace{.2cm} t \in[0,+\infty).
\end{align*}
\end{lemma}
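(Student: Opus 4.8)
The plan is to read \eqref{TDOHSE}--\eqref{TIC} as an autonomous initial value problem in $\mathbb{R}^n$ and to combine the Cauchy--Lipschitz theorem with the a priori bound already recorded in \eqref{EST1}. Denote by $F^n=(F_i^n)_{1\le i\le n}$ the right-hand side of \eqref{TDOHSE}; each component $F_i^n$ is a quadratic polynomial in $(\omega_1,\dots,\omega_n)$, so $F^n$ is of class $\mathcal{C}^\infty$, in particular locally Lipschitz on $\mathbb{R}^n$. The Cauchy--Lipschitz theorem then yields a unique maximal solution $\omega^n\in\mathcal{C}^1([0,T_{\max}),\mathbb{R}^n)$ with $\omega_i^n(0)=\omega_i^{in}$, for some $T_{\max}\in(0,+\infty]$; this already settles local existence, uniqueness and $\mathcal{C}^1$-regularity, and it remains to prove non-negativity and $T_{\max}=+\infty$.

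For non-negativity on $[0,T_{\max})$ I would use a positive-part regularization. Put $(x)_+=\max\{x,0\}$ and let $\tilde F^n$ be the field obtained from $F^n$ by replacing, in the three sums of \eqref{TDOHSE}, every $\omega_j$ with $j\ne i$ by $(\omega_j)_+$; $\tilde F^n$ is again locally Lipschitz, so $\dot{\tilde\omega}^n=\tilde F^n(\tilde\omega^n)$, $\tilde\omega_i^n(0)=\omega_i^{in}$, has a unique maximal solution. Along this system $\tilde F_i^n(\omega)=\tilde P_i^n(\omega)-\omega_i\,\tilde R_i^n(\omega)$ with $\tilde P_i^n(\omega)=(\omega_{i-1})_+\sum_{j=1}^{i-1}j\Lambda_{i-1,j}(\omega_j)_+\ge 0$ and $\tilde R_i^n(\omega)=\sum_{j=1}^{i}j\Lambda_{i,j}(\omega_j)_++\sum_{j=i}^{n}\Lambda_{i,j}(\omega_j)_+\ge 0$ for every $\omega\in\mathbb{R}^n$, so that
\[
\frac{d}{dt}\Big(\tilde\omega_i^n(t)\,\exp\!\Big(\int_0^t\tilde R_i^n(\tilde\omega^n(s))\,ds\Big)\Big)=\exp\!\Big(\int_0^t\tilde R_i^n(\tilde\omega^n(s))\,ds\Big)\,\tilde P_i^n(\tilde\omega^n(t))\ge 0 .
\]
Since the bracketed quantity equals $\omega_i^{in}\ge 0$ at $t=0$ and the exponential factor is positive, $\tilde\omega_i^n\ge 0$ for every $i$ throughout its interval of existence. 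The cut-offs $(\cdot)_+$ are then inactive along $\tilde\omega^n$, so $\tilde\omega^n$ also solves the original system; by uniqueness for $F^n$, together with a routine continuation argument identifying the two maximal intervals, $\omega^n=\tilde\omega^n\ge 0$ on $[0,T_{\max})$.

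With non-negativity available, the identity \eqref{ME}, and hence the estimate \eqref{EST1}, hold on $[0,T_{\max})$; choosing $t_1=0$, $t_2=t$ gives $\sum_{i=1}^n i\,\omega_i^n(t)\le\sum_{i=1}^n i\,\omega_i^{in}=:M_n<\infty$ for all $t\in[0,T_{\max})$. As the $\omega_i^n$ are non-negative, this forces $0\le\omega_i^n(t)\le M_n$ for every $i$ and $t$, i.e. the maximal solution remains in a fixed compact subset of $\mathbb{R}^n$; a bounded solution of a locally Lipschitz ODE cannot cease to exist in finite time, whence $T_{\max}=+\infty$. The displayed inequality in the statement is then precisely \eqref{EST1} with $t_1=0$, $t_2=t$.

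I expect the non-negativity step to be the only delicate point: the naive observation that $\dot\omega_i^n\ge 0$ whenever $\omega_i^n=0$ does not by itself prevent a component from becoming negative, so one genuinely needs the regularization argument above (equivalently, Nagumo's tangency/invariant-set criterion). Everything else is standard finite-dimensional ODE theory together with the previously established estimate \eqref{EST1}.
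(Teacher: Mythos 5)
Your argument is correct and self-contained; the paper itself offers no proof of this lemma, simply deferring to \cite{BAG 2005} (Lemma 13), and what you have written is essentially that standard route: the right-hand side of \eqref{TDOHSE} is polynomial, hence locally Lipschitz, giving a unique maximal $\mathcal{C}^1$ solution; non-negativity follows from a positivity-preserving modification (your positive-part regularization with an integrating factor, equivalently a quasi-positivity/invariance argument); and the a priori bound \eqref{EST1} obtained from \eqref{ME} with $\psi_i=i$ confines the solution to a compact set, so the maximal solution is global. One small wrinkle: your displayed formula for $\tilde R_i^n$ applies $(\cdot)_+$ also to the $j=i$ occurrences of $\omega_i$, which does not quite match your verbal prescription ``every $\omega_j$ with $j\neq i$''; this is immaterial, since the integrating-factor identity only needs $\tilde P_i^n\ge 0$ and positivity of the exponential, not a sign on $\tilde R_i^n$, but you may wish to state one convention consistently. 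You are also right to flag that the naive boundary observation $\dot\omega_i^n\ge 0$ when $\omega_i^n=0$ is not by itself a proof, and your regularization (or Nagumo's criterion) closes that gap; in short, your proof supplies exactly the details the paper leaves to the cited reference.
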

\begin{proof}
The proof follows exactly the same line as given in \cite[Lemma 13]{BAG 2005}.
\end{proof}

\section{\textbf{Proof of Theorem \ref{TH1}}} \label{PTH1}

In this section we shall prove existence of solutions in $Y_1^+$ of \eqref{DOHSE}--\eqref{IC} with initial data in $Y_1^+$, having conditions \eqref{COAGRATE1} and \eqref{COAGRATE2} on the coagulation coefficients.

We omit the proof of Theorem \ref{TH1} with the assumption \ref{COAGRATE1} on coagulation rates because it follows the same lines as that of \cite[Proposition 4]{BAG 2005}.


Let $ i\geq 1$, for $n >i$ and $T\in(0,+\infty)$,  from  \eqref{EST2} we have, 
\begin{align}
\omega_i^n(t) \leq \|\omega^{in}\|_0, \hspace{.2cm} t \in [0,T], \label{BD1}
\end{align}
Following that, we prove the following lemma, which establishes the boundedness of $(\frac{d\omega_i^n}{dt})_{n>i}$ in $L^2(0,T)$.
\begin{lemma} \label{BD2}
Let $i \geq 1$ and $T\in(0,+\infty)$. There exists a constant $\Pi_i(T)$, depending on $\sum_{i=1}^{\infty} \omega_i^{in}$, $T$ and $i$ such that, for each $n>i$ 
\begin{align}
\Bigg|\frac{d\omega_i^n}{dt} \Bigg|_{L^2(0,T)} \leq \Pi_i(T), \hspace{.4cm} t\in [0,T]. \label{PIBND}
\end{align}
\end{lemma}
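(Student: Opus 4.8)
The plan is to bound separately, in $L^2(0,T)$, each of the three terms on the right-hand side of the truncated equation \eqref{TDOHSE} with the index $i$ fixed, and then to conclude by the triangle inequality. Accordingly I write $\dfrac{d\omega_i^n}{dt}=G_i^n-L_i^n-F_i^n$, where $G_i^n(t)=\omega_{i-1}^n(t)\sum_{j=1}^{i-1}j\Lambda_{i-1,j}\omega_j^n(t)$ (with $G_1^n\equiv 0$) is the gain term, $L_i^n(t)=\omega_i^n(t)\sum_{j=1}^{i}j\Lambda_{i,j}\omega_j^n(t)$ is the coagulation loss, and $F_i^n(t)=\sum_{j=i}^{n}\Lambda_{i,j}\omega_i^n(t)\omega_j^n(t)$ is the fragmentation loss.

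For $G_i^n$ and $L_i^n$ the estimate is elementary and does not use the structural assumptions \eqref{COAGRATE1}--\eqref{COAGRATE2}: every index occurring in these finite sums is at most $i$, so the coefficients $j\Lambda_{i-1,j}$ and $j\Lambda_{i,j}$, $1\le j\le i$, are constants depending only on $i$, while \eqref{BD1} gives $0\le\omega_j^n(t)\le\|\omega^{in}\|_0$ on $[0,T]$ for every $j$ and every $n>i$. Hence $G_i^n$ and $L_i^n$ are each bounded on $[0,T]$ by a constant $c_1=c_1(i,\|\omega^{in}\|_0)$ independent of $n$, and therefore $|G_i^n|_{L^2(0,T)}+|L_i^n|_{L^2(0,T)}\le 2c_1\sqrt{T}$.

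The term $F_i^n$ is the only delicate one, because the length of its sum grows with $n$ and no pointwise bound depending on $i$ alone is available; this is where assumption \eqref{COAGRATE2} enters (the case \eqref{COAGRATE1} being handled as in \cite[Proposition~4]{BAG 2005}). From \eqref{COAGRATE} and \eqref{COAGRATE2} one has $\Lambda_{i,j}=\theta_i\theta_j+\kappa_{i,j}\le(1+A)\theta_i\theta_j$, so, bounding $\omega_i^n(t)\le\|\omega^{in}\|_0$ and enlarging the range of the sum, $0\le F_i^n(t)\le (1+A)\theta_i\|\omega^{in}\|_0\sum_{j=1}^{n}\theta_j\omega_j^n(t)$. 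The crucial observation is that $\big(\sum_{j=1}^{n}\theta_j\omega_j^n\big)^2$ is precisely the dissipation quantity controlled uniformly in $n$ by \eqref{EST2}: applied with $t_1=0$, $t_2=T$, and after discarding the non-negative term $\sum_{i=1}^n\omega_i^n(T)$, it gives $\int_0^T\big|\sum_{j=1}^{n}\theta_j\omega_j^n(s)\big|^2\,ds\le 2\|\omega^{in}\|_0$. Hence $|F_i^n|_{L^2(0,T)}\le (1+A)\theta_i\|\omega^{in}\|_0\sqrt{2\|\omega^{in}\|_0}$, independently of $n>i$.

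Collecting the three estimates and setting $\Pi_i(T):=2c_1(i,\|\omega^{in}\|_0)\sqrt{T}+(1+A)\theta_i\|\omega^{in}\|_0\sqrt{2\|\omega^{in}\|_0}$ — a quantity depending only on $i$, $T$ and $\sum_{k\ge1}\omega_k^{in}$ — yields \eqref{PIBND}. The only step that requires more than an elementary pointwise estimate is the one for $F_i^n$: since $\sum_j\theta_j\omega_j^n$ need not be bounded in time, the passage from the $n$-dependent sum to an $n$-independent bound must go through the quadratic a priori estimate \eqref{EST2} rather than through any pointwise control, and this is exactly the point resolved by that dissipation inequality.
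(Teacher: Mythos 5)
Your proposal is correct and follows essentially the same route as the paper: bound the gain and loss sums pointwise by constants depending on $i$ and $\|\omega^{in}\|_0$ (via \eqref{BD1}), and control the tail term through $\Lambda_{i,j}\le(1+A)\theta_i\theta_j$ together with the dissipation bound $\int_0^T\big|\sum_j\theta_j\omega_j^n\big|^2\,ds\le 2\|\omega^{in}\|_0$ from \eqref{EST2}, then take $L^2(0,T)$ norms. The only differences are cosmetic (explicit splitting into $G_i^n$, $L_i^n$, $F_i^n$ and slightly different constants), so no further changes are needed.
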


\begin{proof}
Due to \eqref{COAGRATE2},  for each $i \geq 1$ and $n>i$, we get
\begin{align}
\sum_{j=r_1}^{r_2} \Lambda_{i,j} \omega_j^n \leq (1+A) \theta_i \sum_{j=r_1}^{r_2} \theta_j \omega_j^n, \hspace{.3cm} i\leq r_1< r_2\leq n. \label{TH1PE1}
\end{align}
Now  \eqref{TDOHSE}, \eqref{EST2} and \eqref{TH1PE1} gives us

\begin{align*}
\Big| \frac{d\omega_i^n}{dt}\Big| &\leq \Big(\sum_{j=1}^{i-1}\Lambda_{i-1,j} \Big)\Big|\sum_{j=1}^n\omega_j^n\Big|^2 +\Big(\sum_{j=1}^{i}\Lambda_{i,j} \Big)\Big|\sum_{j=1}^n\omega_j^n\Big|^2  +(1+A)\theta_i \omega_i^n \sum_{j=i}^{n}  \theta_j\omega_j^n,\\
& \leq \Big(\sum_{j=1}^{i-1}\Lambda_{i-1,j} \Big) \|\omega^{in}\|_0^2+ \Big(\sum_{j=1}^{i}\Lambda_{i,j} \Big) \|\omega^{in}\|_0^2 +(1+A)\theta_i\|\omega^{in} \|_0\sum_{j=1}^n \theta_j\omega_j^n,
\end{align*}

\begin{align*}
\Big| \frac{d\omega_i^n}{dt}\Big|_{L^2(0,T)}\leq T^{\frac{1}{2}} \Big(\sum_{j=1}^{i-1}\Lambda_{i-1,j} +\sum_{j=1}^i \Lambda_{i,j} \Big) \|\omega^{in}\|_0^2 + 2(1+A) \theta_i \|\omega^{in} \|_0^{\frac{3}{2}} 
\end{align*}
Hence, the proof of lemma \ref{BD2} is completed.
\end{proof}
As a consequence of \eqref{BD1} and Lemma \eqref{BD2}, for each $i\geq 1$ and $T\in(0,+\infty)$, the sequence $(\omega_i^n)_{n>i}$ is bounded in $W^{1,2}(0, T)$. Since it follows from Theorem \ref{RKT} that $W^{1,2}(0, T)$ is compactly embedded in $\mathcal{C}([0,T])$. Thus, $(\omega_i^n)_{n>i}$ is relatively compact in $\mathcal{C}([0,T])$ for each $i\geq 1$ and $T\in(0,+\infty)$. With the help of diagonal process we can identify a subsequence $(\omega_i^{n_l})$ of $(\omega_i^n)_{n>i}$ and a sequence $\omega=(\omega_i)_{i\geq 1}$  of non-negative continuous function such that, for each $i\geq 1$ and $T\in(0,+\infty)$ 
\begin{align}
\lim_{l \to \infty} \big|\omega_i^{n_l} -\omega_i \big|_{\mathcal{C}([0,T])} =0.\label{LIMIT}
\end{align}

  
 Let $t\in[0,+\infty)$ and $ r\geq 1$, then for $n_l\geq r$, it can be deduce from \eqref{EST1} that
\begin{align*}
\sum_{i=1}^r i \omega_i^{n_l}(t) \leq \|\omega^{in}\|_1.
\end{align*}

Next, using \eqref{LIMIT} we allow to pass to the limit $l\to +\infty$ in the previous inequality, to obtain
\begin{align*}
\sum_{i=1}^r i \omega_i(t) \leq \|\omega^{in}\|_1.
\end{align*}

Since the above estimate holds for any $r\geq 1$, letting $r\to +\infty$, we get
\begin{align*}
\sum_{i=1}^{\infty} i \omega_i(t) \leq \|\omega^{in}\|_1.
\end{align*}
which leads to the conclusion that $\omega(t)\in Y_1^+$ and satisfies \eqref{AMC}.

Thus, we may conclude from \eqref{EST1}, \eqref{EST3}, and \eqref{TH1PE1} that, for $i \geq 1$, $T\in(0,+\infty)$ and $l\geq 1$,
\begin{align}
\int_0^T \Big| \sum_{j=r_1}^{r_2} \Lambda_{i,j} \omega_j^{n_l}(s)\Big|^2 ds \leq 4 (1+A)^2 \theta_i^2 \|\omega^{in}\|_1 r_1^{-\eta}, \hspace{.2cm} i \leq r_1<r_2\leq n_l \label{TH1PE2}
\end{align}
Now thanks to \eqref{LIMIT}, we may pass to the limit as $ l \to +\infty$ in \eqref{TH1PE2} to obtain
\begin{align}
\int_0^T \Big|\sum_{j=r_1}^{r_2}\Lambda_{i,j} \omega_j(s) \Big|^2 ds \leq 4 (1+A)^2 \theta_i^2 \|\omega^{in}\|_1 r_1^{-\eta}, \hspace{.2cm} i \leq r_1<r_2. \label{TH1PE3}
\end{align}
Because the right-hand side of \eqref{TH1PE3} does not depend on $r_2$, the first conclusion of \eqref{TH1PE3} is that for each $i\geq 1$ and $T\in(0,+\infty)$, $\sum_{j=i}^{\infty} \Lambda_{i,j} \omega_j$ belongs to $L^2(0,T)$, and
\begin{align}
\int_0^T \Big|\sum_{j=r_1}^{\infty}\Lambda_{i,j} \omega_j(s) \Big|^2 ds \leq 4 (1+A)^2 \theta_i^2 \|\omega^{in}\|_1 r_1^{-\eta}, \hspace{.2cm} r_1 \geq i. \label{TH1PE4}
\end{align}
In order to pass to the limit in \eqref{TDOHSE}, we shall show that for each $i\geq 1$,
\begin{align}
\sum_{j=i}^{n_l}\Lambda_{i,j} \omega_j^{n_l} \longrightarrow \sum_{j=i}^{\infty} \Lambda_{i,j} \omega_j \hspace{.2cm} \text{in} \hspace{.2cm}L^2(0,T), \label{TH1LIM3}
\end{align}
as $l\to \infty$.

Indeed, for each $r>i$ and $n_l\geq r$, we infer from \eqref{TH1PE2} and \eqref{TH1PE4} that
\begin{align*}
\Bigg|\sum_{j=i}^{n_l} \Lambda_{i,j}&\omega_j^{n_l} - \sum_{j=i}^{\infty} \Lambda_{i,j} \omega_j\Bigg|_{L^2(0,T)}\\
& \leq \sum_{j=i}^{r-1} \Lambda_{i,j}|\omega_j^{n_l} - \omega_j|_{L^2(0,T)} + \Bigg| \sum_{j=r}^{n_l} \Lambda_{i,j} \omega_j^{n_l} \Bigg|_{L^2(0,T)} + \Bigg|\sum_{j=r}^{\infty} \Lambda_{i,j} \omega_j \Bigg|_{L^2(0,T)}\\
& \leq \sum_{j=i}^{r-1}\Lambda_{i,j} |\omega_j^{n_l} - \omega_j|_{L^2(0,T)}  + 4 (1+A)\theta_i \|\omega^{in}\|_1^{\frac{1}{2}} r^{-\frac{\eta}{2}}.
\end{align*}
Owing to \eqref{LIMIT}, we may pass to the limit as $l\to +\infty$ in the preceding estimate and obtain

\begin{align}
\limsup_{l \to +\infty} \Bigg| \sum_{j=i}^{n_l} \Lambda_{i,j} \omega_j^{n_l} - \sum_{j=i}^{\infty} \Lambda_{i,j} \omega_j \Big|_{L^2(0,T)} \leq 4 (1+A)\theta_i \|\omega^{in}\|_1^{\frac{1}{2}} r^{-\frac{\eta}{2}}
\end{align}

for any $ q >i$; hence \eqref{TH1LIM3} follows.\par
Now thanks to \eqref{LIMIT} and \eqref{BD1}, it is easy to show that for $i\geq 1$, we have 
\begin{align}
\lim_{l\to\infty} \Bigg| \sum_{j=1}^{i-1} j\Lambda_{i-1,j} \omega_{i-1}^{n_l}\omega_j^{n_l}-  \sum_{j=1}^{i-1}j \Lambda_{i-1,j} \omega_{i-1}\omega_j \Bigg|_{L^2(0,T)} =0,\label{TH1LIM1}
\end{align}
and
\begin{align}
\lim_{l\to\infty} \Bigg| \sum_{j=1}^{i} j\Lambda_{i,j} \omega_{i}^{n_l}\omega_j^{n_l}-  \sum_{j=1}^{i}j \Lambda_{i,j} \omega_{i}\omega_j \Bigg|_{L^2(0,T)} =0. \label{TH1LIM2}
\end{align}


Finally, using \eqref{TH1LIM3}, \eqref{TH1LIM1}, and \eqref{TH1LIM2}, it is straightforward to pass to the limit in the $i$-th equation of \eqref{TDOHSE} as $l \to +\infty$, and infer that $\omega$ is a solution to \eqref{DOHSE}—\eqref{IC} on $[0,+\infty)$. Thus, the proof of Theorem \ref{TH1} is complete.

\section{\textbf{Proof of Theorem \ref{TH2}}} \label{PTH2}


Owing to the fact that the bounds \eqref{BD1}1 and \eqref{BD2} depend only on the $Y 0$-norm of $\omega^{in}$, the same argument that is used to prove Theorem \ref{TH1} leads to the conclusion that there exists a subsequence $(\omega_i^{n_l})$ of $(\omega_i^n)_{n >i}$, and a sequence $\omega=(\omega_i)_{i\geq 1}$ of non-negative continuous functions, such that, for each $i \geq 1$ and $T \in (0,+\infty)$,

\begin{align}
\lim_{l \to \infty} \big|\omega_i^{n_l} -\omega_i \big|_{\mathcal{C}([0,T])} =0.\label{LIMIT1}
\end{align}

Let $t \in(0,+\infty)$ and $r\geq i$. If $n_l\geq r$, we can deduce from \eqref{EST2} and \eqref{EST3} that 
\begin{align*}
\sum_{i=1}^r \omega_i^{n_l}(t) \leq \|\omega^{in}\|_0 \hspace{.2cm} \text{and} \hspace{.2cm} \sum_{i=1}^r i \omega_i^{n_l}(t) \leq \frac{2}{B}\|\omega^{in}\|_0^{\frac{1}{2}} t^{-\frac{1}{2}}.
\end{align*}

With the help of \eqref{LIMIT1}, we let $l \to +\infty$ in the previous estimates to obtain
\begin{align*}
\sum_{i=1}^r \omega_i(t) \leq \|\omega^{in}\|_0 \hspace{.2cm} \text{and} \hspace{.2cm} \sum_{i=1}^r i \omega_i(t) \leq \frac{2}{B}\|\omega^{in}\|_0^{\frac{1}{2}} t^{-\frac{1}{2}}.
\end{align*} 

Since the aftermentioned estimate holds for any $r\geq 1$, we have
\begin{align}
\|\omega(t)\|_0 \leq \|\omega^{in}\|_0 \hspace{.2cm} \text{and} \hspace{.2cm} \|\omega(t)\|_1 \leq \frac{2}{B}\|\omega^{in}\|_0^{\frac{1}{2}} t^{-\frac{1}{2}}; \label{TH2PE1}
\end{align}
hence \eqref{FM} follows.

Next, let $i \geq 1$ and $T\in(0,+\infty)$. For $r\geq i$ and $n_l\geq r$, \eqref{EST2} and \eqref{TH1PE1}  yields
\begin{align*}
\int_0^T \Bigg|\sum_{j=i}^r \Lambda_{i,j} \omega_j^{n_l}(s) \Bigg|^2 ds \leq 2 (1+A)^2 \theta_i^2 \|\omega^{in}\|_0
\end{align*}
Using \eqref{LIMIT1}, we first pass to the limit as $l\to +\infty$, and then let $r\to +\infty$ to argue that
\begin{align}
\sum_{j=i}^{\infty} \Lambda_{i,j} \omega_j \in L^2(0,T).\label{TH2PE2}
\end{align}


We next consider $t_1 >0$ and $T\in(t_1,+\infty)$. It is clear from \eqref{TH2PE1} that $\omega(t_1) \in Y_1+$. Next, following the calculations that we performed to get \eqref{TH1PE2}--\eqref{TH1LIM3}, we can now use \eqref{EST3}, \eqref{TH1PE1}, and \eqref{LIMIT}, and for each $i \geq 1$, to obtain


\begin{align}
\lim_{l\to\infty} \Big|\sum_{j=i}^{n_l} \Lambda_{i,j} \omega_j^{n_l} -\sum_{j=i}^{\infty} \Lambda_{i,j} \omega_j\Big|_{L^2(0,T)} =0. \label{TH2PE3}
\end{align}
We are now ready to verify that $\omega$ is a solution to \eqref{DOHSE}--\eqref{IC} on $[0,+\infty)$. Let $i=1$ and $n_l\geq i$. Using \eqref{TDOHSE}, we have

 \begin{align}
 \omega_i^{n_l}(t_2)= \omega_i^{n_l}(t_1) + \int_{t_1}^{t_2} \Big( \omega_{i-1}(s) \sum_{j=1}^{i-1} j \Lambda_{i-1,j} \omega_j(s)-\omega_i(s)  \sum_{j=1}^{i} j \Lambda_{i,j} \omega_j(s)-\sum_{j=i}^{\infty} \Lambda_{i,j} \omega_i(s) \omega_j(s)\Big) ds, \label{TH2PE4}
 \end{align}
for each $t_2 \in (0,+\infty)$ and $t_1\in(0,t_2)$. Now thanks to \eqref{LIMIT1} and \eqref{TH2PE3}, we may pass to the limit as $l\to +\infty$, and obtain

\begin{align*}
 \omega_i(t_2)= \omega_i(t_1) + \int_{t_1}^{t_2} \Big( \omega_{i-1}(s) \sum_{j=1}^{i-1} j \Lambda_{i-1,j} \omega_j(s)-\omega_i(s)  \sum_{j=1}^{i} j \Lambda_{i,j} \omega_j(s)-  \omega_i(s) \sum_{j=i}^{\infty} \Lambda_{i,j} \omega_i(s) \omega_j(s)\Big) ds
 \end{align*}
for each $t_2\in(0, +\infty)$ and $t_1 \in(0,t_2)$ .\\
 Notice that as $\omega\in \mathcal{C}([0,t_2])$ and $\sum_{j=1}^{\infty} \Lambda_{i,j} \omega_j$ belongs to $L^1(0,t_2)$ by \eqref{TH2PE2} we may let $t_1 \to 0$ and deduce that $\omega_i$ satisfies the $i$-th equation of \eqref{DOHSE} in the sense of definition given in \eqref{DEF1}. As a result, $\omega$ is a solution to \eqref{DOHSE}--\eqref{IC} on $[0,+\infty)$, completing the proof of Theorem \ref{TH1}.

\section{\textbf{Gelation Phenomenon in equation \eqref{DOHSE}--\eqref{IC}}} \label{GP}




 \begin{theorem}
Assume that $\Lambda_{i,j}$ satisfies \eqref{COAGRATE} and 
\begin{align}
   \Lambda_{i,j} \geq C (ij)^{\frac{\kappa_0}{2}}, \hspace{.3cm}  (i,j) \in \mathbb{N}^2,\label{LBGamma}
   \end{align}
 for some  $ \kappa_0 \in (1,2)$ and $C>0$. Consider $ \omega^{in} \in Y_1^+,$ $ \omega^{in}\not \equiv 0$, and denote $\omega = (\omega_i)_{i\geq 1}$ a solution to \eqref{DOHSE}--\eqref{IC}. Then $T_{\text{gel}} < +\infty$.
\end{theorem}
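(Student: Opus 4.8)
The plan is to derive a differential inequality for the first moment $M_1(t)=\sum_{i\ge1}i\omega_i(t)$ that forces it to reach a lower value in finite time, contradicting mass conservation unless $T_{\text{gel}}<+\infty$. First I would work with the truncated system \eqref{TDOHSE}--\eqref{TIC} and the identity \eqref{ME}, choosing the weight $\psi_i=i$; this is exactly what produced \eqref{EST1}, but now I want to retain the negative terms rather than discard them. The key observation is that in \eqref{ME} with $\psi_i=i$ the differences $\psi_{i+1}-\psi_i=1$, so the first double sum becomes $\sum_{i=1}^{n-1}\sum_{j=1}^{i}j\Lambda_{i,j}\omega_i^n\omega_j^n$, while the loss terms (the boundary term at $i=n$ and the fragmentation term $\sum_i\sum_{j\ge i}\psi_i\Lambda_{i,j}\omega_i^n\omega_j^n$) are genuinely dissipative. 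After the usual symmetrization one is left with a bound of the form
\begin{align*}
\frac{d}{dt}\sum_{i=1}^n i\,\omega_i^n(t) \le -\,c\sum_{i=1}^n\sum_{j=1}^n i\,\Lambda_{i,j}\,\omega_i^n(t)\,\omega_j^n(t)
\end{align*}
for a suitable constant $c>0$ coming from the combinatorics of which monomers split off.

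Next I would insert the lower bound \eqref{LBGamma}: $\Lambda_{i,j}\ge C(ij)^{\kappa_0/2}$. Then the right-hand side is controlled by $-cC\big(\sum_i i^{1+\kappa_0/2}\omega_i^n\big)\big(\sum_j j^{\kappa_0/2}\omega_j^n\big)$, and by Jensen/Hölder applied to the probability-like weights $i\omega_i^n/M_1$ one bounds this below (in absolute value) by a power of $M_1$ itself. Concretely, since $\kappa_0>1$, $\sum_i i^{\kappa_0/2}\omega_i^n\ge$ something comparable to $M_1$ when $\kappa_0/2\ge$... — here one must be a little careful because $\kappa_0/2<1$, so instead I would use that $\sum_i i^{1+\kappa_0/2}\omega_i^n \ge M_1^{1+\kappa_0/2}/M_0^{\kappa_0/2}$-type estimates, or more cleanly exploit $i^{1+\kappa_0/2}\ge i\cdot i^{\kappa_0/2}$ and a direct convexity argument on the measure $i\,\omega_i$. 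The upshot is a closed inequality $\frac{d}{dt}M_1^n(t)\le -\gamma\,[M_1^n(t)]^{1+\delta}$ with $\delta=\kappa_0-1>0$ and $\gamma>0$ independent of $n$, valid as long as $M_1^n$ stays bounded below (which it does, being nonincreasing, as long as it has not yet dropped). Integrating this Bernoulli-type inequality gives $M_1^n(t)\le \big([M_1^n(0)]^{-\delta}+\gamma\delta t\big)^{-1/\delta}$, which tends to $0$; in particular $M_1^n(t_0)< M_1^n(0)=\|\omega^{in}\|_1$ for $t_0$ large, uniformly in $n$.

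Finally I would pass to the limit: along the subsequence $(\omega^{n_l})$ constructed in the proof of Theorem~\ref{TH1} (whose hypotheses hold here since $\omega^{in}\in Y_1^+$ and \eqref{COAGRATE} is assumed), \eqref{LIMIT} gives pointwise-in-$i$ uniform convergence on $[0,T]$, hence by Fatou $M_1(t_0)=\sum_i i\omega_i(t_0)\le \liminf_l \sum_i i\omega_i^{n_l}(t_0)\le \big(\|\omega^{in}\|_1^{-\delta}+\gamma\delta t_0\big)^{-1/\delta}<\|\omega^{in}\|_1$, so $T_{\text{gel}}\le t_0<+\infty$. I would note that $\omega^{in}\not\equiv0$ is exactly what makes $\|\omega^{in}\|_1>0$ so that the strict inequality is meaningful.

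The main obstacle I anticipate is the bookkeeping in the first step: extracting a clean, sign-definite differential inequality for $M_1^n$ from \eqref{ME} with $\psi_i=i$, making sure the coefficient of the surviving negative quadratic form is a genuine positive constant (the factor $j$ multiplying $\Lambda_{i,j}$ interacts with the $\min\{i,j\}$ symmetrization in a way that needs care), and then converting the resulting moment product into a power of $M_1$ alone via Jensen when the relevant exponent $\kappa_0/2$ is less than $1$ — this forces one to use the mass-weighted measure $i\,\omega_i$ rather than $\omega_i$ and to track the exponent $\delta=\kappa_0-1$ correctly so that $1+\delta>1$ and the ODE comparison indeed blows $M_1$ down to a strictly smaller value in finite time.
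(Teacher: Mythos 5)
There is a genuine gap, and it sits exactly at the step you flagged as ``bookkeeping'': the differential inequality
$\frac{d}{dt}\sum_{i=1}^n i\,\omega_i^n \le -c\sum_{i,j} i\,\Lambda_{i,j}\,\omega_i^n\omega_j^n$
does not follow from \eqref{ME} with $\psi_i=i$, because no such $c>0$ exists. With $\psi_i=i$ the gain term $\sum_{i=1}^{n-1}\sum_{j=1}^{i} j\,\Lambda_{i,j}\,\omega_i^n\omega_j^n$ and the loss term $\sum_{i=1}^{n}\sum_{j=i}^{n} i\,\Lambda_{i,j}\,\omega_i^n\omega_j^n$ are, after relabelling $i\leftrightarrow j$ in the first, sums of the \emph{same} quantity $\min\{i,j\}\Lambda_{i,j}\omega_i^n\omega_j^n$ over the regions $\{i\le j\le n-1\}$ and $\{i\le j\le n\}$ respectively; they cancel exactly except for flux terms supported at the truncation boundary ($j=n$ or $i=n$). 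This is precisely the mass-conservation structure of the model: the only negative contributions to $\frac{d}{dt}M_1^n$ involve $\omega_n^n$ and $\Lambda_{i,n}$, and these cannot be bounded below by a bulk quadratic form, let alone by a power $[M_1^n]^{1+\delta}$ uniformly in $n$. Consequently the Bernoulli-type ODE comparison never gets off the ground, and the whole strategy of making $M_1^n$ decay pointwise in time, uniformly in $n$, fails. (Indeed, gelation is a weak-solution phenomenon --- mass escaping to infinitely large clusters --- not something visible as a closed differential inequality for $M_1$ alone.)

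The paper's proof circumvents this by never trying to make $\frac{dM_1}{dt}$ negative. It works with the weak formulation \eqref{GEL1} for a solution $\omega\in Y_1^+$, first with $\phi_i=i$ only to get monotonicity \eqref{MDECREASE}, and then with the \emph{truncated} weight $\Phi_i=\min(i,r)$, for which $j(\Phi_{i+1}-\Phi_i)-\Phi_j\le -r$ on the region $i\ge j\ge r$; this yields the integrated tail estimate \eqref{TAILEST},
$\int_{t_1}^{t_2}\sum_{i\ge r}\sum_{j\ge r}\Lambda_{i,j}\omega_i\omega_j\,ds\le \frac{2}{r}M_1(t_1)$.
Combining this with the lower bound \eqref{LBGamma} and a H\"older/summation argument over the weights $k^{-\kappa_0/2}$ (using $\jmath=\sum_k k^{-(\kappa_0+1)/2}<\infty$, which is where $\kappa_0>1$ enters) gives $\int_{t_1}^{t_2}M_1(s)^2\,ds\le D\,M_1(t_1)$; since $M_1$ is non-increasing, non-negative and in $L^2(0,\infty)$, it must tend to $0$, whence $T_{\mathrm{gel}}<+\infty$ because $M_1(0)>0$. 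If you want to salvage your approach, you would have to replace your first step by exactly this kind of tail/flux estimate with a cut-off weight; your final Fatou passage and the use of $\omega^{in}\not\equiv 0$ are fine but rest on the invalid inequality, and note also that the paper's argument applies to an arbitrary $Y_1^+$ solution rather than only to the limit of the truncated systems.
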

\begin{proof}
 For $ t_2 \geq t_1 \geq 0 $, we multiply \eqref{DOHSE} by $ \phi_i $ and take the sum over $i$ from $1$ to $q$. After a simple calculation, we have
  \begin{align}
  \sum_{i=1}^r \phi_{i}\omega_i(t_2) -\sum_{i=1}^r \phi_{i}\omega_i(t_1)=&\int_{t_1}^{t_2}\Bigg[\sum_{i=1}^{r-1}\sum_{j=1}^{i}j \phi_{i+1}\Lambda_{i,j}\omega_{i}(s)\omega_{j}(s) -\sum_{i=1}^{r}\sum_{j=1}^{i}(j\phi_{i}+\phi_{j})\Lambda_{i,j}\omega_{i}(s) \omega_{j}(s)\nonumber\\
   &-\sum_{i=r+1}^{\infty}\sum_{j=1}^{r}\phi_{j}\Lambda_{i,j}\omega_{i}(s) \omega_{j}(s)\Bigg] ds.\label{GEL1}
\end{align}

 As $ \omega= (\omega_i)_{i\geq 1} \in Y_1^+$ given by Theorem \ref{TH1}, hence it follows from above inequality that for $\phi_i=i$, the function $t \longrightarrow \sum_{i=1}^{\infty} i \omega_i(t)$ is non-increasing, i.e.,
 \begin{align}
 \sum_{i=1}^{\infty} i \omega_i(t_2) \leq \sum_{i=1}^{\infty} i \omega_i(t_1). \label{MDECREASE}
 \end{align}

 Now we consider another sequence $\Phi = (\Phi_i)_{i \geq 1}$ decaying sufficiently rapidly and each solution $ \omega= (\omega_i)_{i\geq 1}$ of \eqref{DOHSE}-- \eqref{IC}, we have,
  \begin{align*}
\sum_{i=1}^{\infty} \Phi_i \omega_i(t_2)-\sum_{i=1}^{\infty} \Phi_i \omega_i(t_2)= \int_{t_1}^{t_2}\sum_{i=1}^{\infty} \sum_{j=1}^{i}[j(\Phi_{i+1}- \Phi_i)-\Phi_j] \Lambda_{i,j} \omega_i(s) \omega_j(s)ds. 
\end{align*}
Let $\Phi_i= \min(i,r)$, then for $i\geq j \geq (r-1)$, we have $[j(\Phi_{i+1}- \Phi_i)-\Phi_j] \leq -r$ and  $[j(\Phi_{i+1}- \Phi_i)-\Phi_j]\leq 0$, otherwise. Since  
\begin{align*}
\sum_{i=1}^{\infty} \Phi_i \omega_i(t_2)-\sum_{i=1}^{\infty} \Phi_i \omega_i(t_1) =\int_{t_1}^{t_2} \Big(\sum_{i=1}^{r-1} \sum_{j=1}^{i} + \sum_{i=r}^{\infty}\sum_{j=1}^{r-1} + \sum_{i=r}^{\infty} \sum_{j=r}^i \Big)[j(\Phi_{i+1}- \Phi_i)-\Phi_j] \Lambda_{i,j} \omega_i(s) \omega_j(s)ds.
\end{align*}
   Hence 
 \begin{align*}
 \sum_{i=1}^{\infty} \Phi_i \omega_i(t_2)-\sum_{i=1}^{\infty}\Phi_i \omega_i(t_1) \leq \frac{-r}{2} \int_{t_1}^{t_2} \sum_{i=r}^{\infty} \sum_{j=r}^{\infty}  \Lambda_{i,j} \omega_i(s) \omega_j(s)ds.
\end{align*}   
   \begin{align}
   \int_{t_1}^{t_2} \sum_{i=r}^{\infty} \sum_{j=r}^{\infty}  \Lambda_{i,j} \omega_i(t) \omega_j(t)dt \leq \frac{2}{r}\sum_{i=1}^{\infty} i \omega_i(t_1). \label{TAILEST}
   \end{align}
Next, consider
   \begin{align*}
   \jmath = \sum_{i=1}^{\infty} i^{-\frac{(\kappa_0 +1)}{2}} < \infty
   \end{align*}
   since $ \kappa_0 +1 >2$. With the help of the Holder's inequality, \eqref{LBGamma}, and \eqref{TAILEST}, it follows for $t_2\geq t_1 \geq 0$ that
   
 
   \begin{align*}
   \int_{t_1}^{t_2}& \Big(\sum_{k=1}^{\infty} k^{-\frac{\kappa_0}{2}} \sum_{i=k}^{\infty} i^{\frac{\kappa_0}{2}} \omega_i(s) \Big)^2 ds\\
   &\leq \jmath \int_{t_1}^{t_2} \sum_{k=1}^{\infty} k^{-\frac{\kappa_0}{2}} k^{\frac{1}{2}}\Big(\sum_{i=k}^{\infty} i^{\frac{\kappa_0}{2}} \omega_i(s) \Big)^2 ds \\
   & \leq \frac{\jmath}{C} \sum_{k=1}^{\infty} k^{-\frac{\kappa_0}{2}} k^{\frac{1}{2}} \int_{t_1}^{t_2} \sum_{i=k}^{\infty} \sum_{j=k}^{\infty} \Lambda_{i,j} \omega_i(s) \omega_j(s) ds\\
   & \leq \frac{2\jmath^2 }{C}M_1(t_1).
   \end{align*}
   However 
   \begin{align*}
   \sum_{k=1}^{\infty} k^{-\frac{\kappa_0}{2}} \sum_{i=k}^{\infty}i^{\frac{\kappa_0}{2}} \omega_i(s) = \sum_{i=1}^{\infty} \Big( \sum_{k=1}^i k^{-\frac{\kappa_0}{2}} \Big)i^{\frac{\kappa_0}{2}} \omega_i(s) \geq \sum_{i=1}^{\infty} i \omega_i(s)
   \end{align*}
   Combining the previous inequalities yields
   \begin{align}
   \int_{t_1}^{t_2} \Big(\sum_{i=1}^{\infty} i \omega_i(s)\Big)^2ds \leq D M_1(t_1) \label{MEST}
   \end{align}
   for some constant $D$ depending only on $ \kappa_0$ and $C$. Next, we deduce from \eqref{MDECREASE}  and \eqref{MEST} that
   \begin{align}
   \int_0^{\infty} M_1(s)^2 ds \leq D \|\omega^{in}\|_1. \label{M1INT}
   \end{align}

From \eqref{MDECREASE} and \eqref{M1INT}, it can be inferred that the total mass $M_1$ is a non-increasing, non-negative function that also belongs to $L^2(0,+\infty)$. Hence

   \begin{align*}
   \lim_{t \to \infty} M_1(t) = 0
   \end{align*}
   which clearly indicates that $T_{\text{gel}} < +\infty$ since $M_1(0) > 0$.
\end{proof}
Next, we take into account the case when $\kappa_0 = 2$; in this case, we can explicitly calculate gelation time.
\begin{prop}
Assume that $\Lambda_{i,j}$ satisfy \eqref{COAGRATE} and 
\begin{align}
\Lambda_{i,j} \geq \zeta ij,  \label{ProductGrowth}
\end{align}
for $i,j \geq 1$ and where constant  $\zeta$ is a positive real number. \\
Consider $T \in (0,+ \infty)$ and  $ \omega^{in}\in Y_1^+$ and assume that \eqref{DOHSE}--\eqref{IC} has a solution $\omega \in Y_1^+ $. Then 
\begin{align}
\lim_{t\to \infty} \|\omega(t) \|_1 =0 
\end{align}
\end{prop}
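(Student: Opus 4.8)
The plan is to exploit the tail estimate \eqref{TAILEST}, which was derived for an arbitrary solution $\omega\in Y_1^+$ of \eqref{DOHSE}--\eqref{IC} (its proof uses only \eqref{GEL1}, the test sequence $\Phi_i=\min(i,r)$, and $\omega\in Y_1^+$, and does \emph{not} invoke any lower bound on the kernel), together with the monotonicity \eqref{MDECREASE} of the mass $M_1(t)=\|\omega(t)\|_1$. Since $\omega^{in}\in Y_1^+$ we start from $M_1(0)=\|\omega^{in}\|_1<\infty$, so all the right-hand sides below are finite.

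First I would specialize \eqref{TAILEST} to $r=1$, $t_1=0$ and an arbitrary $t_2=t>0$, which yields
\begin{align*}
\int_0^t \sum_{i=1}^{\infty}\sum_{j=1}^{\infty} \Lambda_{i,j}\,\omega_i(s)\,\omega_j(s)\,ds \;\le\; 2\,\|\omega^{in}\|_1 .
\end{align*}
Then I would insert the hypothesis \eqref{ProductGrowth}, $\Lambda_{i,j}\ge \zeta ij$, to bound the integrand from below by $\zeta\big(\sum_{i\ge1} i\,\omega_i(s)\big)^2=\zeta\,M_1(s)^2$, which gives
\begin{align*}
\zeta\int_0^t M_1(s)^2\,ds \;\le\; 2\,\|\omega^{in}\|_1 \qquad\text{for every } t>0 ,
\end{align*}
and in particular $M_1\in L^2(0,+\infty)$.

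Next, since \eqref{MDECREASE} says that $M_1$ is non-increasing, we have $M_1(s)\ge M_1(t)$ for all $s\in[0,t]$, hence
\begin{align*}
\zeta\, t\, M_1(t)^2 \;\le\; \zeta\int_0^t M_1(s)^2\,ds \;\le\; 2\,\|\omega^{in}\|_1 ,
\end{align*}
so $M_1(t)\le \big(2\|\omega^{in}\|_1/(\zeta t)\big)^{1/2}$, which forces $\|\omega(t)\|_1=M_1(t)\to 0$ as $t\to+\infty$. One could also bypass the explicit rate and simply use that a non-negative, non-increasing function lying in $L^2(0,+\infty)$ must tend to $0$ at infinity. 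Note that this is the same mechanism as in the case $\kappa_0\in(1,2)$, only simpler, since for $\kappa_0=2$ no auxiliary summable series is needed.

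Because the whole argument is essentially two applications of results already proved in Section \ref{GP}, I do not anticipate any real obstacle. The only points requiring care are: (i) confirming that \eqref{TAILEST} is legitimately available under the present hypotheses, i.e. that its derivation goes through for solutions $\omega\in Y_1^+$ of \eqref{DOHSE}--\eqref{IC} regardless of the kernel's growth — which it does, as noted above; and (ii) justifying the rearrangements of the double series $\sum_{i,j}\Lambda_{i,j}\omega_i\omega_j$ used in \eqref{GEL1}--\eqref{TAILEST}, which is guaranteed by the integrability built into Definition \ref{DEF1} together with $\omega(t)\in Y_1^+$ for all $t$.
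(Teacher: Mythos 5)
Your proposal is correct and takes essentially the same route as the paper: where the paper derives its dissipation inequality \eqref{EST8} by taking $\phi_i=1$ in \eqref{GEL1}, you invoke \eqref{TAILEST} with $r=1$ (which comes from the same computation, since $\min(i,1)\equiv 1$), and both arguments then combine the lower bound \eqref{ProductGrowth} with the monotonicity \eqref{MDECREASE} of the mass to reach the identical decay estimate $\|\omega(t)\|_1\le\bigl(2\|\omega^{in}\|_1/(\zeta t)\bigr)^{1/2}$.
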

\begin{proof}
Under the given assumptions \eqref{ProductGrowth}, existence of the solution is given by Theorem \ref{TH1}. Let $t_2 \geq t_1\geq 0$ and taking $\phi_i=1$ in \eqref{GEL1}, and  passing to the limit as $r\to \infty$ with the help of \eqref{EST2}, we have

\begin{align}
\sum_{i=1}^{\infty} \omega_i(t_2) - \sum_{i=1}^{\infty} \omega_i(t_1) \leq -  \frac{1}{2}\int_{t_1}^{t_2} \sum_{i=1}^{\infty}\sum_{j=1}^{\infty}\Lambda_{i,j}\omega_{i}(s) \omega_{j}(s)ds \label{EST8}
\end{align}

Using the lower bound in \eqref{ProductGrowth}, we finally arrive at

\begin{align}
\sum_{i=1}^{\infty} \omega_i(t_2) +\frac{\zeta}{2}\int_{t_1}^{t_2} \|\omega(s)\|_1^2 dt  \leq \sum_{i=1}^{\infty} \omega_i(t_1) 
\end{align}


Now consider $t \in (0,+\infty)$. We know from \eqref{MDECREASE} that $t\longrightarrow \|\omega(t)\|_1$ is not increasing, and we infer from the foregoing estimate (with $t_1=0 $ and $t_2=t$) that

\begin{align*}
\frac{\zeta t}{2} \|\omega(t)\|_1^2 \leq \sum_{i=1}^{\infty} \omega_i^{in} \leq \sum_{i=1}^{\infty}i \omega_i^{in}= \|\omega^{in}\|_1
\end{align*}

Thus 
\begin{align*}
\| \omega(t) \|_1 \leq \Bigg( \frac{2\|\omega^{in}\|_1 }{\zeta t } \Bigg)^{\frac{1}{2}}, \hspace{.3cm}  t \in (0,+\infty),
\end{align*}
that concludes the proof of the Proposition.\\
\end{proof}
Now recall \eqref{TH2PE1}, it can be easily noticed that we have proved a much stronger result than \eqref{ME}, namely
\begin{prop}
Assume that \eqref{COAGRATE} and \eqref{COAGRATE2} hold, and that $\omega^{in} =(\omega)_{i\geq 1} \in Y_0^+$. Then there exists at least one solution $\omega$ to \eqref{DOHSE}--\eqref{IC} on $[0,+\infty)$ such that $\omega(t) \in Y_0^+$ for each $t\in[0,+\infty)$, and, for each $t>0$,
\begin{align}
\|\omega(t)\|_1 \leq \frac{2}{B} \|\omega^{in}\|_0^{\frac{1}{2}} t^{-\frac{1}{2}}.
\end{align}
\end{prop}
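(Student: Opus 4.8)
The plan is to recognise this proposition not as an independent statement but as a quantitative refinement of Theorem~\ref{TH2}, whose proof already produces everything that is needed. First I would simply take the solution $\omega=(\omega_i)_{i\geq 1}$ constructed in Theorem~\ref{TH2}: it is defined on $[0,+\infty)$, it obeys $\omega(t)\in Y_0^+$ for every $t$ (the bound $\|\omega(t)\|_0\leq\|\omega^{in}\|_0$ from \eqref{TH2PE1} gives $\omega(t)\in Y_0$, and non-negativity is inherited from being a locally uniform limit of non-negative functions), and it arises as the limit \eqref{LIMIT1} of a subsequence $(\omega_i^{n_l})$ of solutions to the truncated system \eqref{TDOHSE}--\eqref{TIC}. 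No new construction is required; the only item to add is the explicit decay rate of the first moment with the constant $2/B$.

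For that I would go back to the estimate \eqref{MASSRBND}, which already asserts that for every $n\geq 3$ and every $t\in(0,+\infty)$,
\[
\sum_{i=1}^n i\,\omega_i^n(t)\ \leq\ \frac{2}{B}\Big(\sum_{i=1}^n\omega_i^{in}\Big)^{1/2}t^{-1/2}\ \leq\ \frac{2}{B}\,\|\omega^{in}\|_0^{1/2}\,t^{-1/2}.
\]
Fixing $r\geq 1$, for $n_l\geq r$ the left-hand side dominates $\sum_{i=1}^r i\,\omega_i^{n_l}(t)$, so using \eqref{LIMIT1} to pass to the limit $l\to\infty$ yields $\sum_{i=1}^r i\,\omega_i(t)\leq \tfrac{2}{B}\|\omega^{in}\|_0^{1/2}t^{-1/2}$; since the right-hand side is independent of $r$, letting $r\to\infty$ gives $\|\omega(t)\|_1\leq \tfrac{2}{B}\|\omega^{in}\|_0^{1/2}t^{-1/2}$, which is exactly the claimed bound and a sharpening of \eqref{TH2PE1}. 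That $\omega$ solves \eqref{DOHSE}--\eqref{IC} in the sense of Definition~\ref{DEF1} is again nothing new, being precisely the conclusion reached at the end of Section~\ref{PTH2}.

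I do not anticipate a genuine obstacle: every ingredient — the truncated moment bound \eqref{MASSRBND} (itself deduced from \eqref{EST1}--\eqref{EST2} via the monotonicity of $s\mapsto\sum_{i=1}^n i\,\omega_i^n(s)$), the compactness limit \eqref{LIMIT1}, and the monotone passage $r\to\infty$ — has already been carried out in Sections~\ref{MR} and~\ref{PTH2}. The proof is therefore essentially a remark: the solution built for Theorem~\ref{TH2} already satisfies the stated instantaneous decay of $\|\omega(t)\|_1$, and it suffices to record the explicit constant $2/B$ coming from $\inf_{i\geq1}\theta_i/i=B$ in \eqref{COAGRATE2}.
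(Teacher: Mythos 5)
Your proposal is correct and matches the paper's own treatment: the paper proves this proposition simply by recalling that the bound $\|\omega(t)\|_1 \leq \frac{2}{B}\|\omega^{in}\|_0^{1/2}t^{-1/2}$ was already obtained as \eqref{TH2PE1} in the proof of Theorem \ref{TH2}, via the truncated estimate \eqref{MASSRBND}, the convergence \eqref{LIMIT1}, and letting $r\to\infty$, which is exactly the route you describe.
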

According to the above result, gelation happens for coagulation rates satisfying \eqref{COAGRATE2}, even when the initial total mass is infinite.

Finally, in the following section, we will demonstrate that as time increases to infinity, the number of particles decreases to zero.
\section{\textbf{Appendix}}
From \eqref{EST8}, it follows that
\begin{align*}
\sum_{i=1}^{\infty} \omega_i(t_1) + \frac{1}{2}\int_{t_1}^{t_2} \sum_{i=1}^{\infty}\sum_{j=1}^{\infty}\Lambda_{i,j}\omega_{i}(s) \omega_{j}(s)ds \leq \sum_{i=1}^{\infty} \omega_i(t_1)  
\end{align*}
which implies that
\begin{align*}
\int_{t_1}^{t_2} \Big(\sum_{i=1}^{\infty}\omega_{i}(s) \Big)^2 ds \leq \frac{2}{C}\sum_{i=1}^{\infty} \omega_i(t_1)
\end{align*}

\begin{align*}
\int_0^{\infty} \Big(\sum_{i=1}^{\infty} \omega_i(s)\Big)^2 ds \leq \sum_{i=1}^{\infty} i \omega_i^{in} 
\end{align*}

We can see from \eqref{EST8} that the total number of particles $M_0$ is a non-increasing and non-negative function of time that also belongs to $L^2(0, +\infty)$. Therefore
\begin{align*}
\lim_{t\to \infty} M_0(t) =0. 
\end{align*}

\par

%

\end{document}